\newcommand{\bbP} {\mathbb{P}}
\newcommand{\bbQ} {\mathbb{Q}}
\newcommand{\bbZ} {\mathbb{Z}}
\newcommand{\calP} {\mathcal{P}}
\newcommand{\ve}{\varepsilon}
\newcommand{\wh}{\widehat}
\newcommand{\hgt}{\mathop{\operatorname{ht}}}
\newcommand{\ST}{\mathop{\operatorname{ST}}}
\newcommand{\Aut}{\mathop{\operatorname{Aut}} \nolimits}
\newcommand{\Cov}{\mathop{\operatorname{Cov}} \nolimits}
\newcommand{\vect}[1]{\underline{#1}}
\newcommand{\stir}[2]{\genfrac{\{}{\}}{0pt}{}{#1}{#2}}
\theoremstyle{plain}
 \newtheorem{thm}{Theorem}[section]
 \newtheorem{lem}[thm]{Lemma}
 \newtheorem{prop}[thm]{Proposition}
 \newtheorem{conj}[thm]{Conjecture}
 \newtheorem*{thm*}{Theorem}
\theoremstyle{definition}
 \newtheorem{dfn}[thm]{Definition}
 \newtheorem{fct}[thm]{Fact}
 \newtheorem{rmk}[thm]{Remark}
\theoremstyle{remark}
\numberwithin{equation}{section}
\begin{document}


\title{Integrality of the simple Hurwitz numbers}

\author{Shintarou Yanagida}
\address{Research Institute for Mathematical Sciences,
Kyoto University, Kyoto 606-8502, Japan}
\email{yanagida@kurims.kyoto-u.ac.jp}

\subjclass[2010]{05A15}
\date{December 17, 2014}


\begin{abstract}
We show the integrality of the simple Hurwitz numbers.
The main tool is the cut-and-join operator,
and our proof is a purely combinatorial one.
\end{abstract}

\maketitle
\tableofcontents

\section{Introduction}

The purpose of this note is to prove the integrality of the simple Hurwitz numbers.
The statement is as follows.

\begin{thm*}[{Theorem \ref{thm:main}}]
The Hurwitz number $h_{g,\mu}$ is a positive integer except for the cases 
$h_{g,(1)} = 0$ $(g\in \bbZ_{\ge1})$ 
and $h_{g,(2)}=h_{g,(1,1)}=1/2$ $(g \in \bbZ_{\ge0})$.
\end{thm*}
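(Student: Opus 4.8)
The plan is to reduce the statement to a counting problem in the symmetric group and then to control it through the centralizer of a transitive subgroup. Put $d=|\mu|$ and $b=2g-2+d+\ell(\mu)$. The combinatorial reformulation underlying the cut--and--join formalism identifies $h_{g,\mu}=\tfrac{1}{d!}\,|X_{g,\mu}|$, where $X_{g,\mu}$ is the set of tuples $(\sigma,\tau_1,\dots,\tau_b)$ with $\sigma\in C_\mu$ (the conjugacy class of cycle type $\mu$ in $S_d$), each $\tau_i$ a transposition, $\sigma\tau_1\cdots\tau_b=1$, and $G:=\langle\sigma,\tau_1,\dots,\tau_b\rangle$ transitive. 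The group $S_d$ acts on $X_{g,\mu}$ by simultaneous conjugation, and the stabilizer of $(\sigma,\tau_1,\dots,\tau_b)$ is exactly the centralizer $Z_{S_d}(G)$; since $G$ is transitive, $Z_{S_d}(G)$ acts semiregularly, so $|Z_{S_d}(G)|$ divides $d$. Decomposing into orbits gives $h_{g,\mu}=\sum_O 1/|Z_{S_d}(G_O)|$, a sum of unit fractions whose denominators divide $d$.

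The heart of the matter is the following lemma: \emph{if $b\ge1$, so that $G$ contains a transposition, then $|Z_{S_d}(G)|\le2$, with equality forcing $d=2$ (and $G=S_2$).} A nontrivial $z\in Z_{S_d}(G)$ is fixed--point--free by semiregularity; for a transposition $(a\,b)\in G$, the identity $z(a\,b)z^{-1}=(z(a)\,z(b))=(a\,b)$ together with $z(a)\neq a$ forces $z$ to interchange $a$ and $b$. Hence $z$ is an involution, and any two such central involutions agree on $\{a,b\}$, so their product fixes $a$ and is therefore trivial; thus $z$ is unique and $|Z_{S_d}(G)|\le 2$. If such a $z$ exists, each $\tau_i$ commutes with $z$ and hence, by the same computation, swaps a $z$--pair; applying the quotient map $Z_{S_d}(z)\twoheadrightarrow S_{d/2}$ (action on the set of $z$--pairs) to $\sigma\tau_1\cdots\tau_b=1$ kills every $\tau_i$, so $\sigma$ lies in the kernel $(\bbZ/2)^{d/2}$ of pair--preserving elements; then $G\le(\bbZ/2)^{d/2}$ preserves every $z$--pair, and transitivity forces $d=2$.

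Granting the lemma, the rest is bookkeeping. For $d\ge3$ one has $b\ge2$ and every stabilizer is trivial, so $h_{g,\mu}$ equals the number of $S_d$--orbits on $X_{g,\mu}$, a nonnegative integer. For $d=1$: $h_{0,(1)}=1$, and $X_{g,(1)}=\varnothing$ (no transpositions in $S_1$) gives $h_{g,(1)}=0$ for $g\ge1$. For $d=2$: the only transitive subgroup of $S_2$ is $S_2$, so each stabilizer has order $2$; moreover $X_{g,(2)}$ and $X_{g,(1,1)}$ each consist of a single tuple (the relevant parity of $b$ being automatic), whence $h_{g,(2)}=h_{g,(1,1)}=1/2$. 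Finally, positivity in the remaining cases follows by induction on $b$ from the cut--and--join recursion $b\,h_{g,\mu}=(\text{join terms})+(\text{connected cut terms})+(\text{disconnected cut terms})$, all coefficients being positive integers: it suffices to exhibit one strictly positive summand on the right, and distributing the available genus so that no factor degenerates to $h_{g',(1)}$ with $g'\ge1$ achieves this in each case, the base cases being immediate.

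The step I expect to be the true obstacle is the centralizer lemma — in particular pinning down exactly when the centralizer is nontrivial; once that is in hand, the orbit count, the small--degree cases, and the positivity induction are routine. A proof resting solely on the cut--and--join operator would need comparable input, since the divisibility of the right--hand side of the recursion by $b$ is not apparent from the recursion itself.
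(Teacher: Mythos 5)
Your proof is correct, but it takes a genuinely different route from the paper. The paper never leaves the cut-and-join recursion: it first shows (Proposition \ref{prop:polynomial}) that every coefficient in the recursion \eqref{eq:cj:h} is a non-negative integer, so that $h_{g,\mu}\in\bbZ_{\ge0}[h_{0,(n)}\mid n\in\bbZ_+]$ and the only possible source of non-integrality is the exceptional values $h_{g',(2)}=h_{g',(1,1)}=1/2$; it then runs a case analysis tracking exactly when a factor $(2)$ or $(1,1)$ can occur on the right-hand side and checks that the resulting half-integers always enter with even multiplicity. You instead go back to the monodromy description $h_{g,\mu}=\tfrac{1}{d!}|X_{g,\mu}|=\sum_O 1/|Z_{S_d}(G_O)|$ and prove a centralizer lemma: a transitive subgroup of $S_d$ containing a transposition has trivial centralizer once $d\ge3$, the $d=2$ case accounting precisely for the exceptional $1/2$'s. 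Your lemma is sound (semiregularity of the centralizer, the forced involution swapping $a$ and $b$, and the wreath-product projection killing the transpositions are all correct), and it gives a more conceptual explanation of the integrality --- each connected cover of degree $\ge3$ simply has no automorphisms --- as well as of why the exceptions sit exactly at $|\mu|\le2$. What it costs is the identification of the $\log\tau$-defined $h_{g,\mu}$ with the weighted count of transitive tuples (standard, and asserted in the paper, but the paper's stated aim is a proof using only the cut-and-join operator, which also yields the extra structural statement of Proposition \ref{prop:polynomial}). Your positivity argument via the recursion is only sketched, but it is routine and no less complete than the paper's own treatment of positivity.
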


By the term ``Hurwitz number",  we mean 
the weighted counting of the connected coverings of the projective line $\bbP^1$ 
which have a ramification at one point with the profile data given by $\mu$, and 
which also have simple ramifications at $r$ points.
Here $\mu$ is a partition and $g$ is a non-negative integer (the genus of the covering).
The number $r$ of simple ramifications is given by
$r := 2g-2+\ell(\mu)+|\mu|$,
where $\ell(\mu)$  is the length and $|\mu|$ is the total sum  of $\mu$.
Sometimes this number $h_{g,\mu}$ is called the simple Hurwitz number,
which appears in the title of this note.
See \S\ref{sect:hurwitz} for the detailed definition.

The main tool of our proof is the cut-and-join equation,
which will be explained in \S\ref{sect:cj}.
The content of \S\ref{sect:hurwitz} and \S\ref{sect:cj} 
is more or less known. 
The proof will be given in \S\ref{sect:main}.

\subsection*{Acknowledgements}

The author is supported by the Grant-in-aid for 
Scientific Research (No.\ 25800014), JSPS.
This work is also partially supported by the 
JSPS for Advancing Strategic International Networks to 
Accelerate the Circulation of Talented Researchers
``Mathematical Science of Symmetry, Topology and Moduli, 
  Evolution of International Research Network based on OCAMI"''.

This note is written during the author's stay at UC Davis. 
The author would like to thank the institute for support and hospitality.
He would also like to thank Professor Motohico Mulase 
for showing his interest and providing fundamental knowledge on the Hurwitz number.

\subsection*{Notation}

We denote by $\bbZ_{+} := \{1,2,3,\ldots\}$ the set of positive integers.

We follow \cite{M:1995} for the notations of partitions and related notions.
The Greek letters $\lambda,\mu,\ldots$ will be used for denoting partitions 
unless otherwise stated.
The symbol like $\vect{m}$ denotes a multi-index unless otherwise stated.

Here we recall a few notations of partitions in \cite{M:1995} 
which will be repeatedly used in the main text.
A partition is a finite sequence of positive integers 
ordered by size.
We include the empty sequence $\emptyset$ as a partition.
For a partition $\lambda = (\lambda_1,\lambda_2,\ldots,\lambda_k)$,
its length and total sum are denoted by 
$$
 \ell(\lambda):=k,\quad|\lambda| := \sum_{i=1}^k \lambda_i.
$$
The set of all partitions will be denoted by $\calP$.
We also denote by $\lambda \vdash n$  a partition $\lambda$ with $|\lambda| = n$.

A partition is identified with the Young diagram.
For $\lambda=(\lambda_1,\lambda_2,\ldots,\lambda_k)$,
the boxes are places at $\{(i,j) \in \bbZ^2 \mid 1\le i \le k, \ 1 \le j \le \lambda_i\}$.
Denote by $\lambda'$ the partition associated to the transposed Young diagram of $\lambda$.
The arm and leg for a box $\square$ with respect to the Young diagram 
associated to a partition $\lambda$ is denoted by $a_\lambda(\square)$ and $l_\lambda(\square)$.
If $\square$ is placed at $(i,j)$, then the arm and leg can be expressed as
$$
 a_\lambda(\square) := \lambda_i - j,\quad
 l_\lambda(\square) := \lambda'_j - i.
$$
Here if $i>\ell(\lambda)$ we set $\lambda_i := 0$.
The hook length is denoted by $h_\lambda(\square)$, so that 
$$
 h_\lambda(\square) = a_\lambda(\square) + l_\lambda(\square) + 1.
$$
Finally, 
$$
 z_\lambda := \prod_{n = 1}^{\infty} n^{m_n(\lambda)} m_n(\lambda)!,\quad
 m_n(\lambda) := \#\{ i \in \bbZ_{+} \mid \lambda_i = n\}.
$$

We also follow \cite{M:1995} for the notations of symmetric functions.
$\Lambda$ denotes the space of symmetric functions over $\bbZ$.
$s_\lambda$ denotes the Schur symmetric function associated to a partition $\lambda$.
Recall that the set $\{s_\lambda \mid \lambda \in \calP\}$ is a basis of $\Lambda$. 
$p_n$ denotes the $n$-th power sum symmetric function,
and we set $p_\lambda := \prod_{i=1}^{\ell(\lambda)} p_{\lambda_i}$ for 
a partition $\lambda$.
Recall also that the set $\{p_\lambda \mid \lambda \in \calP\}$ is a basis of 
$\Lambda_{\bbQ} := \Lambda\otimes_{\bbZ} \bbQ$.
If we want to express the set $x=(x_1,x_2,\ldots)$ of variables in symmetric functions, 
we will use the notation  
$$
 p_n[x] := x_1^n + x_2^n + \cdots.
$$

\section{Hurwitz number}
\label{sect:hurwitz}

Denote by $\Cov_d\bigl(\mu^{(1)},\ldots,\mu^{(r)}\bigr)$
the weighted number of $d$-fold coverings of $\bbP^1$
ramified over $r$ fixed points of $\bbP^1$ with monodromies in the conjugacy classes
corresponding to the partitions $\mu^{(i)}$.
The weight of a covering is the reciprocal of the order of its group of automorphisms.
Burnside formula says
\begin{align}\label{eq:Burnside:1}
 \Cov_d\bigl(\mu^{(1)},\ldots,\mu^{(r)}\bigr)
 = \sum_{\lambda \vdash d} 
    \left(\dfrac{\dim \lambda}{d!}\right)^2 
    \prod_{i=1}^r f_\lambda(\mu^{(i)}),\quad
 f_\lambda(\mu) := \dfrac{ \# C_\mu \chi^{\lambda}_\mu}{\dim \lambda}.
\end{align}
Here $C_\mu$ denotes the set of the conjugacy class corresponding to the partition $\mu$,
$\dim \lambda$ and $\chi^\lambda$ are the dimension and the character 
of the irreducible representation of 
the $d$-th symmetric group $\mathfrak{S}_{d}$ associated to 
the partition $\lambda$,
and $\chi^\lambda_\mu$ the value of $\chi^\lambda$ at the conjugacy class $C_\mu$.

Using the dimension formula (the so-called hook formula) 
$$
 \dim \lambda = \dfrac{|\lambda|!}{h_\lambda},\quad
 h^\lambda := \prod_{\square \in \lambda} h_\lambda(\square)
$$
and the counting formula of $C_\mu$
$$
 \# C_\mu = \dfrac{|\mu|!}{z_\mu},
$$
we can rewrite the formula \eqref{eq:Burnside:1} into the form 
\begin{align}\label{eq:Burnside:2}
 \Cov_d\bigl(\mu^{(1)},\ldots,\mu^{(r)}\bigr)
 = \sum_{\lambda \vdash d} 
    \left(\dfrac{\dim \lambda}{d!}\right)^2 
    \prod_{i=1}^r \dfrac{ h^\lambda \chi^{\lambda}_{\mu^{(i)}}}{z_{\mu^{(i)}}}.
\end{align}

Let us consider
$$
 \Cov_{d,r}(\mu) := 
 \Cov_{d}\bigl(\underbrace{(2,1^{d-2}),\ldots,(2,1^{d-2})}_{r},\mu\bigr)
$$ 
the weighted counting of coverings with simple ramifications at $r$ points 
and an arbitrary ramification at one point.
In \cite{O:2000} Okounkov discovered 
that the generating function of 
$\Cov_{d}\bigl((2,1^{d-2}),\ldots,(2,1^{d-2}),\mu,\nu\bigr)$
coincides with the $\tau$-function of the Toda lattice hierarchy.
Here we quote his result in the simplified version $\nu = \emptyset$.

\begin{fct}[{\cite{O:2000}}]
Let $Q$ and $\beta$ be (commuting) indeterminates and set 
$$
 \tau[x] := \sum_{r=0}^{\infty}\dfrac{\beta^r}{r!}\sum_{d=0}^\infty Q^r
  \sum_{\mu \vdash d} \Cov_{d,r}(\mu) p_\mu[x].
$$
Define $D^{(2)}$ to be the linear operator acting on the space $\Lambda$ of symmetric function as 
$$
 D^{(2)} s_\lambda = s_\lambda \kappa_\lambda/2, \quad
  \kappa_\lambda :=  2 f_\lambda\bigl((2,1^{d-2})\bigr)
$$
for arbitrary $\lambda \in \calP$.
Then we have
\begin{align}\label{eq:tau}
 \tau[x] = e^{\beta D^{(2)}}e^{Q p_1}.
\end{align}
\end{fct}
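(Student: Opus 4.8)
The plan is to unwind the definition of $\tau[x]$ by means of the Burnside formula \eqref{eq:Burnside:2}, re-expand everything in the Schur basis (in which $D^{(2)}$ is diagonal by construction), and then recognise the two resulting exponentials. First I would specialise \eqref{eq:Burnside:2} to $\mu^{(1)}=\cdots=\mu^{(r)}=(2,1^{d-2})$ together with one extra part $\mu$. Since $\dim\lambda = d!/h^\lambda$ for $\lambda\vdash d$ we have $\dim\lambda/d! = 1/h^\lambda$, while by the definition of $f_\lambda$ in \eqref{eq:Burnside:1} one has $f_\lambda(\nu)=h^\lambda\chi^\lambda_\nu/z_\nu$ for any $\nu\vdash d$, so in particular $f_\lambda\bigl((2,1^{d-2})\bigr)=\kappa_\lambda/2$. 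Writing the $r$ equal factors as $(\kappa_\lambda/2)^r$ and the remaining one as $f_\lambda(\mu)=h^\lambda\chi^\lambda_\mu/z_\mu$, the prefactor $(\dim\lambda/d!)^2=(h^\lambda)^{-2}$ cancels against that single $h^\lambda$, and I obtain
\begin{equation*}
 \Cov_{d,r}(\mu)=\sum_{\lambda\vdash d}\dfrac{1}{h^\lambda}\Bigl(\dfrac{\kappa_\lambda}{2}\Bigr)^{r}\dfrac{\chi^\lambda_\mu}{z_\mu}.
\end{equation*}

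Next I would substitute this into the definition of $\tau[x]$ and reorganise the (formal) summations. For fixed $d$ and $\lambda\vdash d$ the inner sum over $\mu\vdash d$ is $\sum_{\mu\vdash d}z_\mu^{-1}\chi^\lambda_\mu\,p_\mu[x]$, which is exactly $s_\lambda[x]$ by the Frobenius character formula; this collapses the $\mu$-summation and leaves
\begin{align*}
 \tau[x] &= \sum_{r\ge0}\dfrac{\beta^r}{r!}\sum_{d\ge0}Q^d\sum_{\lambda\vdash d}\dfrac{1}{h^\lambda}\Bigl(\dfrac{\kappa_\lambda}{2}\Bigr)^{r}s_\lambda[x] \\
 &= \sum_{d\ge0}Q^d\sum_{\lambda\vdash d}\dfrac{e^{\beta\kappa_\lambda/2}}{h^\lambda}\,s_\lambda[x],
\end{align*}
where in the last step the series in $r$ sums to $e^{\beta\kappa_\lambda/2}$. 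Since $D^{(2)}s_\lambda=(\kappa_\lambda/2)s_\lambda$, the operator $e^{\beta D^{(2)}}$ acts on $s_\lambda$ by the scalar $e^{\beta\kappa_\lambda/2}$, so the right-hand side equals $e^{\beta D^{(2)}}\bigl(\sum_{d\ge0}Q^d\sum_{\lambda\vdash d}(h^\lambda)^{-1}s_\lambda[x]\bigr)$.

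It then remains to identify $\sum_{d\ge0}Q^d\sum_{\lambda\vdash d}(h^\lambda)^{-1}s_\lambda[x]=e^{Qp_1}$. This is the Frobenius formula at the identity class: from $\chi^\lambda_{(1^d)}=\dim\lambda=d!/h^\lambda$ we get $p_1[x]^d=\sum_{\lambda\vdash d}(\dim\lambda)\,s_\lambda[x]$, i.e.\ $p_1[x]^d/d!=\sum_{\lambda\vdash d}(h^\lambda)^{-1}s_\lambda[x]$; multiplying by $Q^d$ and summing over $d\ge0$ gives the claim. Combining the last two displays yields $\tau[x]=e^{\beta D^{(2)}}e^{Qp_1}$, which is the asserted identity \eqref{eq:tau}.

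The argument is entirely formal, so the only thing requiring care is the interchange of the various infinite summations. I would organise it degree by degree in $Q$: for each fixed $d$ the sums over $\lambda\vdash d$ and $\mu\vdash d$ are finite, and the only infinite series involved is the one defining the exponential in $\beta$ (equivalently, the sum over $r$), which is legitimate as an identity of formal power series in $\beta$. With that bookkeeping in place there is no genuine obstacle; the substance of the statement reduces to the two classical facts $s_\lambda=\sum_\mu z_\mu^{-1}\chi^\lambda_\mu p_\mu$ and $\dim\lambda=|\lambda|!/h^\lambda$ together with the Burnside formula \eqref{eq:Burnside:2}.
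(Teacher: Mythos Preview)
Your proof is correct and follows essentially the same route as the paper's: apply the Burnside formula, collapse the sum over $\mu$ via $s_\lambda=\sum_\mu z_\mu^{-1}\chi^\lambda_\mu p_\mu$, sum the $\beta$-series to $e^{\beta\kappa_\lambda/2}$, and pull out $e^{\beta D^{(2)}}$. The only difference is in the final identification $\sum_{\lambda}Q^{|\lambda|}(h^\lambda)^{-1}s_\lambda[x]=e^{Qp_1}$: the paper obtains it by specialising the Cauchy identity $\exp\bigl(\sum_n p_n[x]p_n[y]/n\bigr)=\sum_\lambda s_\lambda[x]s_\lambda[y]$ at $p_1[y]=Q$, $p_{n\ge2}[y]=0$, whereas you invoke the inverse Frobenius relation $p_{(1^d)}=\sum_\lambda\chi^\lambda_{(1^d)}s_\lambda=\sum_\lambda(\dim\lambda)s_\lambda$ directly. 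These are equivalent one-line facts, so the two arguments are the same in substance.
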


\begin{proof}
Using the expansion formula \cite[Chap. I, (7.7)]{M:1995} 
\begin{align}\label{eq:s-p}
 s_\lambda = \sum_{\mu \vdash |\lambda|} \dfrac{\chi^{\lambda}_\mu}{z_\mu} p_\mu
\end{align}
and the Burnside formula  \eqref{eq:Burnside:1} or \eqref{eq:Burnside:2},
one can easily find
$$
 \tau[x] = 
 \sum_{\lambda \in \calP} Q^{|\lambda|} e^{\beta \kappa_\lambda/2} 
 \dfrac{\dim \lambda}{|\lambda|!} s_\lambda[x].
$$
Recalling 
$$
 \dfrac{\dim \lambda}{|\lambda|!} = \left. s_\lambda \right|_{p_1=1,p_2=p_3=\cdots=0}
$$
we have
$$
 \tau[x] =  
  \sum_{\lambda \in \calP} Q^{|\lambda|} e^{\beta \kappa_\lambda/2} 
  s_\lambda[x]  \left. s_\lambda \right|_{p_1=1,p_2=p_3=\cdots=0}.
$$
Using the operator $D^{(2)}$ it is rewritten as
$$
 \tau[x] =  
  e^{\beta D^{(2)}} 
  \sum_{\lambda \in \calP} Q^{|\lambda|} 
  s_\lambda[x]  \left. s_\lambda \right|_{p_1=1,p_2=p_3=\cdots=0}.
$$
Finally by the Cauchy formula
$$
 \exp\left(\sum_{n=1}^{\infty}\dfrac{1}{n} p_n[x] p_n[y] \right)
 = \sum_{\lambda \in \calP} s_\lambda[x] s_\lambda[y],
$$
we have the result \eqref{eq:tau}.
\end{proof}

As a corollary, we can compute $\Cov_{d,r}(\mu)$ by the formula
\begin{align}\label{eq:covdr:formula}
 \Cov_{d,r}(\mu) = \text{coefficient of $p_\mu$ in } \  
  \dfrac{1}{|\mu|!} \left(D^{(2)}\right)^r p_1^{|\mu|}. 
\end{align}

Now we also note that the operator $D^{(2)}$ 
is nothing but the cut-and-join operator.

\begin{fct}
$D^{(2)}$ is realized by the following differential operator:
\begin{align}\label{eq:D2}
 D^{(2)} = \dfrac{1}{2} \sum_{k,l = 1}^{\infty}
  \left((k+l)p_k p_l \dfrac{\partial}{\partial p_{k+l}}
       + k l p_{k+l}\dfrac{\partial}{\partial p_k} \dfrac{\partial}{\partial p_l} \right).
\end{align}
In other words,
the Schur symmetric functions $s_\lambda$ are eigenfunctions of 
the differential operator $D^{(2)}$ with 
eigenvalues $\kappa_\lambda/2$.
\end{fct}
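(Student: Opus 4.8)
The plan is to show that the explicit differential operator on the right-hand side of \eqref{eq:D2}, call it $\widetilde D$, coincides with the operator $D^{(2)}$ defined above by $D^{(2)}s_\lambda = (\kappa_\lambda/2)s_\lambda$; the ``in other words'' clause is then merely a restatement. Since $\{s_\lambda\}$ is a basis of $\Lambda_{\bbQ}$, equality of the two operators is equivalent to the eigenvalue statement, and the idea is to realize both operators as one and the same thing on the centers of the symmetric group algebras, namely left multiplication by the class of all transpositions.

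First I would invoke the Frobenius characteristic isomorphism $\mathrm{ch}\colon\bigoplus_{d\ge0}Z(\bbQ\mathfrak{S}_d)\to\Lambda_{\bbQ}$ (see \cite{M:1995}), under which the $\lambda$-isotypic line on the left corresponds to $\bbQ\,s_\lambda$ and the class sum $\mathbf{C}_\mu:=\sum_{\sigma\in C_\mu}\sigma$ maps to $z_\mu^{-1}p_\mu$, so that $\mathrm{ch}^{-1}(p_\mu)=z_\mu\,\mathbf{C}_\mu$. Let $\mathbf{T}_d:=\mathbf{C}_{(2,1^{d-2})}$ be the sum of all transpositions in $\mathfrak{S}_d$, and let $\mathbf{T}:=\bigoplus_d\mathbf{T}_d$ act on $\bigoplus_dZ(\bbQ\mathfrak{S}_d)$ by left multiplication. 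Because $\mathbf{T}_d$ is central it acts on the irreducible $\mathfrak{S}_d$-module of type $\lambda$ by the scalar $\#C_{(2,1^{d-2})}\,\chi^\lambda_{(2,1^{d-2})}/\dim\lambda=f_\lambda\big((2,1^{d-2})\big)=\kappa_\lambda/2$; hence $\mathrm{ch}\circ\mathbf{T}\circ\mathrm{ch}^{-1}$ is diagonal in the Schur basis with eigenvalue $\kappa_\lambda/2$, i.e.\ it is exactly $D^{(2)}$. It therefore remains to prove $\mathrm{ch}\circ\mathbf{T}\circ\mathrm{ch}^{-1}=\widetilde D$, which by the description of $\mathrm{ch}^{-1}$ amounts to comparing, for each $\mu$ and $\nu$, the coefficient of $p_\nu$ in $\widetilde D\,p_\mu$ with the one coming from $\mathbf{T}\cdot z_\mu\mathbf{C}_\mu$.

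The remaining input is the cut-and-join count. Fixing a permutation $\sigma$ of cycle type $\mu$ and running over all transpositions $\tau$, one records the cycle type of $\tau\sigma$: if $\tau$ moves two points lying in distinct cycles of $\sigma$, of lengths $k$ and $l$, then $\tau\sigma$ replaces these two cycles by a single $(k+l)$-cycle, and there are exactly $kl$ such $\tau$ --- this ``join'' produces the term $k l\,p_{k+l}\,\partial_k\partial_l$; if $\tau$ moves two points in the same cycle of $\sigma$, of length $k+l$ and separating it into a $k$-cycle and an $l$-cycle, then exactly $k+l$ transpositions $\tau$ do this for each unordered split $\{k,l\}$ (and $k$ of them when $k=l$) --- this ``cut'' produces $(k+l)\,p_kp_l\,\partial_{k+l}$. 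Summing over all $k,l\ge1$, the global $\tfrac12$ in \eqref{eq:D2} records the passage from unordered to ordered pairs, while the factors $m_n(\mu)!$ in $z_\mu=\prod_nn^{m_n(\mu)}m_n(\mu)!$ are reproduced by the derivatives $\partial/\partial p_n$; a short check of the ratio $z_\mu/z_\nu$ against these multiplicities gives the match. Together with the previous paragraph this yields $\widetilde D=D^{(2)}$.

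I expect the only delicate point to be the bookkeeping in the last step: reconciling the $m_n(\mu)!$ in $z_\mu$ with the action of $\partial/\partial p_n$, handling the degenerate case $k=l$ consistently in both the cut and the join contributions, and pinning down the overall constant $\tfrac12$. Everything else is standard; alternatively one could run the whole argument directly inside $\Lambda_{\bbQ}$ by substituting \eqref{eq:s-p} into $\widetilde D\,s_\lambda$ and reducing the desired identity to the Murnaghan--Nakayama-type recursion for $\chi^\lambda$ under multiplication by a single transposition.
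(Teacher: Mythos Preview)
Your argument is correct, and it is genuinely different from what the paper does. The paper does not actually prove this Fact: it only records the explicit eigenvalue formula \eqref{eq:kappa} via \eqref{eq:chi}, and then in the subsequent Remark points to two external proofs---a direct verification in \cite{Zhou:2003} (expand $s_\lambda$ in power sums and compute $\widetilde D\,s_\lambda$ by brute force) and Stanley's theorem \cite{Stanley:1989} that the one-parameter operator $D^{(2)}(\alpha)$ is diagonalized by Jack polynomials, specialized to $\alpha=1$. Your route via the Frobenius characteristic, transporting left multiplication by the transposition class sum $\mathbf T_d$ from $Z(\bbQ\mathfrak S_d)$ to $\Lambda_\bbQ$, is a third and more structural argument: it explains simultaneously \emph{why} the Schur functions are eigenfunctions (Schur's lemma on the isotypic pieces) and \emph{why} the differential operator has the cut-and-join shape (the effect of a transposition on the cycle decomposition). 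What it buys over the cited approaches is conceptual clarity and a direct link to the covering interpretation; what Stanley's approach buys instead is the Jack deformation for free. The bookkeeping you flag---matching the $z_\mu/z_\nu$ ratios against the $m_n!$ from the $p$-derivatives, the $k=l$ degeneration, and the global $\tfrac12$---is indeed the only place requiring care, but it is routine once one writes $\mathbf T\cdot\mathbf C_\mu=\sum_\nu d_{\mu\nu}\mathbf C_\nu$ and compares $d_{\mu\nu}$ with the coefficient of $p_\nu$ in $\widetilde D\,p_\mu$ after unwinding $\mathrm{ch}^{-1}(p_\mu)=z_\mu\mathbf C_\mu$.
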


This eigenvalue  can be calculated from 
the definition $\kappa_\lambda/2 =  f_\lambda\bigl((2,1^{|\lambda|-2}))$ 
and the formula \cite[Chap. I, \S7, Example 5]{M:1995}
\begin{align}\label{eq:chi}
 \chi^{\lambda}_{\mu} = \sum_S (-1)^{\hgt(S)}.
\end{align}
Here the index $S$ runs over the set of all sequences of partitions 
$S=(\lambda^{(0)},\lambda^{(1)},\ldots,\lambda^{(m)})$ 
such that $m=\ell(\mu)$,
$0=\lambda^{(0)} \subset \lambda^{(1)} \subset \cdots \subset  \lambda^{(m)}$ 
and that $\lambda^{(i)} - \lambda^{(i-1)}$ is a border strip, 
and $\hgt(\lambda)$ is the number of rows occupied by $\lambda$ \emph{minus one}.
The result is 
\begin{align}\label{eq:kappa}
 \kappa_\lambda/2 =  \sum_{i\in\bbZ_+}\lambda_i(\lambda_i-2i+1)/2,
\end{align}
which is an integer.

\begin{rmk}
It seems that there are several ways to prove this statement.
A direct check can be seen, for example, at \cite[Proposition 2.2]{Zhou:2003}.
Here we also cite \cite[Theorem 3.1]{Stanley:1989},
which showed that the cut-and-join operator with one parameter 
$$
 D^{(2)}(\alpha) := 
 \dfrac{1}{2}\Biggl\{
  \sum_{k,l = 1}^{\infty}
   \Bigl((k+l)p_k p_l \dfrac{\partial}{\partial p_{k+l}}
       + \alpha k l p_{k+l}\dfrac{\partial}{\partial p_k} \dfrac{\partial}{\partial p_l} \Bigr)
 + (\alpha-1) \sum_{k-1}^{\infty} k^2 p_k \dfrac{\partial}{\partial p_k}
 \Biggr\}
$$
has the Jack symmetric functions as eigenfunctions.
Let us denote by $P_\lambda(x;\alpha)$ the (monomial) Jack symmetric function 
normalized as $P_\lambda(x;\alpha) = m_\lambda + (\text{lower terms})$,
where $m_\mu$ is the monomial symmetric functions and 
the ordering is taken to be the dominance ordering
(see \cite[Chap. VI, \S10]{M:1995}).
Then we have 
$$
 D^{(2)} P_\lambda(x;\alpha) = P_\lambda(x;\alpha) \ve_\lambda(\alpha), \quad
 \ve_\lambda(\alpha) := \alpha n(\lambda') - n(\lambda) +(\alpha-1)|\lambda|/2,
$$
where $n(\lambda) := \sum_i (i-1) \lambda_i = \sum_i \binom{\lambda'_i}{2}$.
Since $P_\lambda(x;1)=s_\lambda$, we recover the statement \eqref{eq:D2}
\end{rmk}

Finally we introduce the Hurwitz numbers $h_{g,\mu}$ as follows.

\begin{dfn}
Define $h_{g,\mu} \in \bbQ$ for $g \in \bbZ_{\ge0}$ and $\mu \in \calP$ by
$$
 \log \tau[x] = \sum_{d,r=0}^{\infty} Q^d \dfrac{\beta^r}{r!}
  \sum_{\mu \vdash d} h_{g,\mu} p_\mu[x]
$$
with the relation
\begin{align}\label{eq:gr}
 2g - 2 + \ell(\mu) + |\mu| = r.
\end{align}
\end{dfn}

By the property of $\log$, 
one finds that $h_{g,\mu}$ counts the \emph{connected} coverings 
with simple ramifications at $r$ points and the ramification given by $\mu$ 
at one point.

By this definition and the formula \eqref{eq:covdr:formula}, 
one can compute the Hurwitz numbers.
We list here the numbers for $g \le 6$ and $|\mu| \le 6$ in 
Tables \ref{table:mu<=5} and \ref{table:mu=6}.
They (of course) coincide with the results in \cite{LZZ:2000} and \cite{EMS:2011}.

\begin{table}[htbp]
\begin{align*}
{\small 
\begin{array}{r||rrrrrrr}
&g=0 & 1 & 2 & 3 & 4 & 5 & 6 
\\
\hline
\hline
 \mu=(1)&1&0&0&0&0&0&0 
\\
\hline
 (2)&1/2&1/2&1/2&1/2&1/2&1/2&1/2
\\
(1^2)&1/2&1/2&1/2&1/2&1/2&1/2&1/2
\\
\hline
(3)&1&9&81&729&6561&59049&531441
\\
(2,1)&4&40&364&3280&29524&265720&2391484
\\
(1^3)&4&40&264&3280&29524&265720&2391484
\\
\hline
(4)&4&160&5824&209920&7558144&272097280&9795518464
\\
(3,1)&27&1215&45927&1673055&60407127&2176250895&78359381127
\\
(2,2)&12&480&17472&629760&22674432&816291840&29386555392
\\
(2,1^2)&120&5460&206640&7528620&271831560&9793126980&352617206880
\\
(1^4)&120&5460&206640&7528620&271831560&9793126980&352617206880
\\
\hline
(5)&25&3125&328125&33203125&3330078125&333251953125&33331298828125
\\
(4,1)&256&35840&3956736&409108480&41394569216&4156871147520&416314027933696
\\
(3,2)&216&26460&2748816&277118820&27762350616&2777408868780&277768823459616
\\
(3,1^2)&1620&234360&26184060&2719617120&275661886500&27700994510280&2774997187556940
\\
(2^2,1)&1440&188160&20160000&2059960320&207505858560&20803767828480&2082272553861120
\\
(2,1^3)&8400&1189440&131670000&13626893280&1379375197200&138543794363520&13876390744734000
\\
(1^5)&8400&1189440&131670000&13626893280&1379375197200&138543794363520&13876390744734000
\end{array}}
\end{align*}
\caption{The Hurwitz numbers $h_{g,\mu}$ for $g\le6$ and $|\mu|\le5$}
\label{table:mu<=5}.
\end{table}

\begin{table}[htbp]
\begin{align*}
{\tiny
\begin{array}{r||rrrrrrr}
&g=0 & 1 & 2 & 3 & 4 & 5 & 6 
\\
\hline
\hline
\mu=(6)&216&68040&16901136&3931876080&895132294056&202252053177720&45575342328002976
\\
(5,1)&3125&1093750&287109375&68750000000&15885009765625&3615783691406250&817717742919921875
\\
(4,2)&2560&788480&192783360&44490434560&10093234511360&2277308480778240&512887872299714560
\\
(4,1^2)&26880&9838080&2638056960&638265788160&148222087453440&33821881625226240
&7657985270680120320
\\
(3^2)&1215&357210&86113125&19797948720&4487187539835&1012204758777030&227953607360883345
\\
(3,2,1)&45360&14696640&3710765520&872470478880&199914163328880&45334411650702720
&10235275836481639440
\\
(3,1^3)&181440&65998800&17634743280&4259736280800&988561437383520&225514718440830000
&51056208831963782160
\\
(2^3)&6720&2016000&486541440&111644332800&25269270586560&5696315163302400&1282471780397902080
\\
(2^2,1^2)&241920&80438400&20589085440&4874762692800&1120875021826560&254613060830419200
&57531761566570529280
\\
(2,1^4)&1088640&382536000&100557737280&24109381296000&5576183206513920&1270116357617016000
&287353806073982746560
\\
(1^6)&1088640&382536000&100557737280&24109381296000&5576183206513920&1270116357617016000
&287353806073982746560
\end{array}}
\end{align*}
\caption{The Hurwitz numbers $h_{g,\mu}$ for $g\le6$ and $|\mu|=6$}
\label{table:mu=6}
\end{table}

One can observe various properties of the values of $h_{g,\mu}$ from 
these tables.
Here we list a few formula which are more or less known. 

\begin{lem}\label{lem:hgn:known}
\begin{enumerate}
\item 
For any $n\in\bbZ_+$ we have
$$h_{0,(n)} = n^{n-3}.$$

\item 
For any $g \in \bbZ_{\ge1}$ we have
$$
 h_{g,(1)} = 0.
$$

\item
For any $g \in \bbZ_{\ge0}$ we have
$$
 h_{g,(2)} = h_{g,(1,1)} = 1/2.
$$

\item
For any $g \in \bbZ_{\ge0}$ and $n \in \bbZ_{\ge2}$ 
we have
$$
 h_{g,(2,1^{n-2})} = h_{g,(1^n)}.
$$
\end{enumerate}
\end{lem}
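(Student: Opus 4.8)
The plan is to read all four identities off the closed form of the $\tau$-function obtained in the proof of \eqref{eq:tau},
$$
 \tau[x]=\sum_{\lambda\in\calP}Q^{|\lambda|}e^{\beta\kappa_\lambda/2}\frac{\dim\lambda}{|\lambda|!}\,s_\lambda[x],
$$
combined with the definition of $h_{g,\mu}$. Write $F:=\log\tau=\sum_{d\ge1}Q^d F_d$. Since each $s_\lambda[x]$ is homogeneous of degree $|\lambda|$ in the $p_n$, each $\tau_d:=[Q^d]\tau$, and hence each $F_d$, is homogeneous of positive degree $d$ with vanishing constant term (here $[Q^d p_\mu]$ denotes the operation of extracting the coefficient of the monomial $Q^d p_\mu$). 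Expanding $\tau=e^F$ and isolating the one-part power sum $p_{(n)}=p_n$, only the linear term of $\sum_m F^m/m!$ survives, because any product $F_{d_1}\cdots F_{d_m}$ with $m\ge2$ is a combination of $p_\nu$ with $\ell(\nu)\ge m\ge2$. Thus, using \eqref{eq:s-p} and $z_{(n)}=n$,
$$
 [Q^n p_n]F=[Q^n p_n]\tau=\frac1{n\cdot n!}\sum_{\lambda\vdash n}e^{\beta\kappa_\lambda/2}\,\dim\lambda\cdot\chi^\lambda_{(n)} .
$$

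For (1), recall from \eqref{eq:chi} that $\chi^\lambda_{(n)}$ vanishes unless $\lambda=(a,1^{n-a})$ is a hook, in which case $\chi^\lambda_{(n)}=(-1)^{n-a}$ and $\dim\lambda=\binom{n-1}{a-1}$ by the hook-length formula, while \eqref{eq:kappa} gives $\kappa_{(a,1^{n-a})}/2=\tfrac12\bigl(a(a-1)-(n-a)(n-a+1)\bigr)=\tfrac12 n(2a-1-n)$. Setting $j=a-1$ and summing a binomial series collapses the alternating sum into
$$
 [Q^n p_n]F=\frac1{n\cdot n!}\,e^{-n(n-1)\beta/2}\,(e^{n\beta}-1)^{n-1}=\frac{n^{n-2}}{n!}\,\beta^{n-1}+O(\beta^n).
$$
Since $[Q^n p_n]F=\sum_{g\ge0}h_{g,(n)}\,\beta^{2g+n-1}/(2g+n-1)!$, comparing the lowest-order terms ($g=0$, $r=n-1$) yields $h_{0,(n)}=n^{n-2}(n-1)!/n!=n^{n-3}$.

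Parts (2) and (3) are the same extraction in low degree. In degree $Q^1$ one has $[Q^1]F=[Q^1]\tau=p_1$ (only $\lambda=(1)$, with $\kappa_{(1)}=0$), which is independent of $\beta$; since $r=2g$ when $\mu=(1)$, this forces $h_{g,(1)}=0$ for $g\ge1$. In degree $Q^2$, $\tau_2=\tfrac12\cosh\beta\cdot p_1^2+\tfrac12\sinh\beta\cdot p_2$ (from $\lambda=(2),(1,1)$ with $\kappa/2=\pm1$), hence $[Q^2]F=\tau_2-\tfrac12 p_1^2=\tfrac12(\cosh\beta-1)p_1^2+\tfrac12\sinh\beta\cdot p_2$; since $r=2g+2$ for $\mu=(1^2)$ and $r=2g+1$ for $\mu=(2)$, reading off the coefficients of $\beta^r/r!$ in $\cosh$ and $\sinh$ gives $h_{g,(1,1)}=h_{g,(2)}=1/2$ for all $g\ge0$.

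For (4) I would use the cut-and-join equation in connected form: differentiating \eqref{eq:tau} in $\beta$ gives $\partial_\beta\tau=D^{(2)}\tau$, and dividing by $\tau=e^F$ and using \eqref{eq:D2} yields
$$
 \partial_\beta F=\frac12\sum_{k,l\ge1}(k+l)p_kp_l\frac{\partial F}{\partial p_{k+l}}
 +\frac12\sum_{k,l\ge1}kl\,p_{k+l}\Bigl(\frac{\partial^2F}{\partial p_k\,\partial p_l}+\frac{\partial F}{\partial p_k}\frac{\partial F}{\partial p_l}\Bigr).
$$
Fix $n\ge2$ and take the coefficient of $Q^n p_1^n$ of both sides. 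Every monomial produced by the second sum (and by the quadratic term) carries a factor $p_{k+l}$ with $k+l\ge2$, so it contributes nothing; in the first sum only $k=l=1$ can yield $p_1^n$, with contribution $[Q^n p_2 p_1^{n-2}]F$. Hence $\partial_\beta[Q^n p_1^n]F=[Q^n p_2 p_1^{n-2}]F$. As $[Q^n p_1^n]F=\sum_{g\ge0}h_{g,(1^n)}\,\beta^{2g+2n-2}/(2g+2n-2)!$ and $[Q^n p_2 p_1^{n-2}]F=\sum_{g\ge0}h_{g,(2,1^{n-2})}\,\beta^{2g+2n-3}/(2g+2n-3)!$, differentiating the first series and matching like powers of $\beta$ gives $h_{g,(2,1^{n-2})}=h_{g,(1^n)}$. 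The main obstacle is the bookkeeping in (1) — correctly pinning down the hook characters, dimensions and $\kappa$-values and collapsing the alternating binomial sum into $(e^{n\beta}-1)^{n-1}$ — together with the check in (4) that no term other than $k=l=1$ feeds the coefficient of $p_1^n$; beyond these, everything is a direct extraction of generating-function coefficients.
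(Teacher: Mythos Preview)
Your argument is correct. The route differs from the paper in presentation but not in substance for (1) and (4), and is genuinely different for (2)--(3).

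For (1), both proofs land on the same alternating hook sum $\sum_{a=1}^n(-1)^{n-a}\binom{n-1}{a-1}\bigl(\tfrac{n(2a-n-1)}{2}\bigr)^{n-1}$; the paper arrives there via the coefficient formula \eqref{eq:covdr:formula} and collapses it with the identity \eqref{eq:fmp}, whereas you keep the full $\beta$-dependence, factor the sum as $e^{-n(n-1)\beta/2}(e^{n\beta}-1)^{n-1}$, and read off $g=0$ as the leading coefficient. Your packaging has the pleasant by-product of giving a closed generating function for all $h_{g,(n)}$ at once. For (2) and (3), the paper instead applies the cut-and-join recursion (Fact~\ref{fct:cj}) to $\vect{k}=(1)$, $(2)$, $(1,1)$ and solves the resulting two-term relations; your direct computation of $\tau_1$, $\tau_2$ and $F_2=\tau_2-\tfrac12\tau_1^2$ is more elementary and self-contained. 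For (4), the paper also uses cut-and-join, but in its recursion form for $H_g$ at $\vect{k}=(1^n)$, where only the ``join'' term survives; your PDE form $\partial_\beta F=D^{(2)}F+\text{(quadratic)}$ and extraction of the $p_1^n$ coefficient is exactly the connected version of the same computation, and your observation that every term except $k=l=1$ carries a factor $p_{\ge2}$ is the correct reason no other term contributes.
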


We close this section by giving a proof of (1).
The others will be shown with the help of 
the cut-and-join equation in the next section.

\begin{proof}[{Proof of Lemma \ref{lem:hgn:known} (1)}]
By the definition we have 
$$
 \Cov_{d,r}\bigl((d)\bigr) = h_{g,(d)},
$$
where $r$ and $g$ are related by $r=2g-1+d$ as \eqref{eq:gr}.
Thus from \eqref{eq:covdr:formula} it is enough to compute
$$
 h_{0,(n)} = \text{coefficient of $p_n$ in } \ 
  \dfrac{1}{n!} \left(D^{(2)}\right)^{n-1} p_1^n.
$$

Recall the Pieri formula \cite[Chap. I, (5,17)]{M:1995} 
$s_\lambda e_r = \sum_\mu s_\mu$
for a  partition $\lambda$ and a  positive integer $r$.
Here the running index $\mu$ runs over the set of partitions of $|\lambda| +n$
such that $\mu \supset \lambda$ and the skew Young diagram $\mu -\lambda$ 
is a vertical strip.
Using the Pieri formula repeatedly  for $r=1$, one can see
$$
 p_1^n = s_{(1)}^n = \sum_{\lambda\vdash n} \#\ST(\lambda) s_\lambda,
$$
where $\ST(\lambda)$ denotes the set of standard tableaux with shape $\lambda$.
Thus we have 
\begin{align*}
 h_{0,(n)}&= \text{coefficient of $p_n$ in } \ 
  \dfrac{1}{n!} \left(D^{(2)}\right)^{n-1} \sum_{\lambda\vdash n} \#\ST(\lambda) s_\lambda,
\\
&= \text{coefficient of $p_n$ in } \ 
  \dfrac{1}{n!} \sum_{\lambda\vdash n} 
   \#\ST(\lambda) \left(\dfrac{\kappa_{\lambda}}{2}\right)^{n-1} s_\lambda.
\end{align*}

Next we recall the expansion \eqref{eq:s-p}
 $s_\lambda = \sum_{\mu \vdash |\lambda|} s_\mu \chi^\lambda_\mu/z_\mu$ 
and the formula \eqref{eq:chi} of $\chi^{\lambda}_{\mu}$.
In the case $\mu = (n)$ the latter gives 
$$
 \chi^\lambda_{(n)} = 
 \begin{cases}
  (-1)^{n-r} & \lambda = (r,1^{n-r}) \ \text{ for some } 1\le r \le n  \\
  0 & \text{otherwise}
 \end{cases}.
$$
Thus we have
\begin{align*}
h_{0,(n)}
&=\dfrac{1}{n!} \sum_{r=1}^{n} 
   \#\ST\bigl((r,1^{n-r})\bigr) \left(\dfrac{\kappa_{(r,1^{n-r})}}{2}\right)^{n-1} 
   \dfrac{(-1)^{n-r}}{n}.
\end{align*}
Since $\#\ST\bigl((r,1^{n-r})\bigr) = \binom{n-1}{r-1}$ by an easy observation,
we have
\begin{align}\label{eq:lem:h0n:int}
h_{0,(n)}
&=\dfrac{1}{n!\cdot n} \sum_{r=1}^{n} 
   (-1)^{n-r} \binom{n-1}{r-1} \left(\dfrac{n(2r-n-1)}{2}\right)^{n-1}
=\dfrac{1}{n!\cdot n} \sum_{s=0}^{n-1} 
   (-1)^{s} \binom{n-1}{s} \left(\binom{n}{2}- n s\right)^{n-1}.
\end{align}
Here we used the formula \eqref{eq:kappa} for $\kappa_\lambda$.

Finally, using the elementary formula
\begin{align}\label{eq:fmp}
f_{m,p} := \sum_{s=0}^m \binom{m}{s} (-1)^s s^p = 
\begin{cases}
0 & 1 \le p \le m-1 \\ 
(-1)^m m! & p = m
\end{cases},
\end{align}
we have 
\begin{align*}
h_{0,(n)}
=\dfrac{ (- n)^{n-1}}{n!\cdot n}  f_{n-1,n-1} = n^{n-3}.
\end{align*}
\end{proof}

By a slight generalization of the above argument,
we have 

\begin{prop}\label{prop:int:hg(n)}
For any $g\in\bbZ_{\ge0}$ and $n \in \bbZ_{\ge3}$,
$h_{g,(n)}$ is a positive integer.
\end{prop}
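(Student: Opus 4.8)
The plan is to repeat the computation from the proof of Lemma~\ref{lem:hgn:known}(1) with the exponent $n-1$ (which is the value of $2g-1+n$ at $g=0$) replaced by $r:=2g-1+n$, the number of simple ramifications attached to $h_{g,(n)}$ by \eqref{eq:gr}, and then to deduce integrality from the resulting closed formula and positivity from the cut-and-join operator. Just as in the Lemma, the monodromy around the fully ramified point of a covering counted by $\Cov_{n,r}\bigl((n)\bigr)$ is an $n$-cycle, which is transitive, so the covering is automatically connected and $h_{g,(n)}=\Cov_{n,r}\bigl((n)\bigr)$. Running the same manipulations that produced \eqref{eq:lem:h0n:int} --- the Pieri formula to write $p_1^n=\sum_{\lambda\vdash n}\#\ST(\lambda)\,s_\lambda$, the eigenvalue formula \eqref{eq:kappa}, the expansion \eqref{eq:s-p}, and the values of $\chi^\lambda_{(n)}$ at hook shapes $\lambda$ recalled there --- then gives
\begin{align*}
 h_{g,(n)}=\frac{1}{n!\cdot n}\sum_{s=0}^{n-1}(-1)^s\binom{n-1}{s}\left(\binom{n}{2}-ns\right)^{r}.
\end{align*}

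For integrality I would expand $\bigl(\binom{n}{2}-ns\bigr)^{r}$ by the binomial theorem, interchange the two summations, and identify the inner sum $\sum_{s=0}^{n-1}(-1)^s\binom{n-1}{s}s^{k}$ with $f_{n-1,k}$. Since \eqref{eq:fmp} only covers $k\le n-1$, the one extra ingredient is the elementary identity
\begin{align*}
 f_{m,p}=(-1)^m\,m!\,\stir{p}{m}\qquad(p\ge0),
\end{align*}
where $\stir{p}{m}$ is the Stirling number of the second kind, which vanishes for $p<m$ and equals $1$ for $p=m$; it follows from $x^{p}=\sum_{j}\stir{p}{j}\,x(x-1)\cdots(x-j+1)$ and the vanishing of the $m$-th finite difference on polynomials of degree below $m$. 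Only the terms with $k\ge n-1$ survive, and collecting the powers of $n$ (the prefactor contributes $(n-1)!/(n!\cdot n)=1/n^{2}$) leaves
\begin{align*}
 h_{g,(n)}=(-1)^{n-1}\sum_{k=n-1}^{r}(-1)^{k}\binom{r}{k}\left(\binom{n}{2}\right)^{r-k}n^{k-2}\,\stir{k}{n-1}.
\end{align*}
For $n\ge3$ and $k\ge n-1$ we have $k-2\ge n-3\ge0$, so every summand is an integer and $h_{g,(n)}\in\bbZ$; at $g=0$ only $k=n-1$ remains and $h_{0,(n)}=n^{n-3}$ reappears.

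For positivity I would instead invoke \eqref{eq:covdr:formula}: $h_{g,(n)}=\Cov_{n,r}\bigl((n)\bigr)$ equals the coefficient of $p_n$ in $\tfrac{1}{n!}\bigl(D^{(2)}\bigr)^{r}p_1^{n}$. The point is that the operator \eqref{eq:D2} has non-negative structure constants in the basis $\{p_\lambda\mid\lambda\in\calP\}$: each of its monomial terms carries a single $p_\lambda$ to a non-negative multiple of a single $p_\mu$, either splitting a part ($p_{k+l}\to p_kp_l$) or merging two parts ($p_kp_l\to p_{k+l}$). Hence the coefficient of $p_n$ in $\bigl(D^{(2)}\bigr)^{r}p_1^{n}$ is a sum of positive rational numbers, indexed by the length-$r$ chains of such moves leading from $p_1^n$ to $p_n$, and it is enough to exhibit one such chain. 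Use $n-1$ merges to fuse the parts of $p_1^n$ one at a time into a single part of size $n$ (the $j$-th merge combining the current part of size $j$ with a part of size $1$, for $j=1,\ldots,n-1$), landing on a positive multiple of $p_n$; then repeat $g$ times the two-move cycle ``split $p_n$ into $p_1p_{n-1}$, then merge $p_1p_{n-1}$ back to $p_n$'' (here $n\ge3$, so $p_1$ and $p_{n-1}$ are distinct variables and the second move acts with coefficient $1$), each cycle multiplying the coefficient of $p_n$ by a positive number. This uses $n-1+2g=r$ moves, so the coefficient of $p_n$, and with it $h_{g,(n)}$, is strictly positive.

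The only step demanding real care is the integrality bookkeeping: the extension of \eqref{eq:fmp} past $p=m$ via Stirling numbers, and the observation that the surviving power of $n$ in each term is $n^{k-2}$ with $k\ge n-1$ --- non-negative exactly when $n\ge3$, which is precisely where the hypothesis is used. The positivity half is comparatively soft once the non-negativity of the cut-and-join structure constants is noticed.
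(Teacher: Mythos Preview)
Your argument is correct and matches the paper's: both extend the computation of Lemma~\ref{lem:hgn:known}(1) to exponent $r=2g-1+n$, expand binomially, and use the Stirling-number identity $f_{m,p}=(-1)^m m!\stir{p}{m}$ to see that only the terms with $k\ge n-1$ survive, each carrying the integer factor $n^{k-2}$ when $n\ge3$. Your exponent is the right one (the paper's printed $n-1+g$ is a slip for $n-1+2g$), and your positivity argument via the non-negative structure constants of $D^{(2)}$ in the $p$-basis, together with an explicit length-$r$ chain, is simply a more explicit rendering of the paper's forward reference to the cut-and-join relations.
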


\begin{proof}
Denote by  $\stir{k}{j} := \sum_{i=0}^j (-1)^{j-i}\binom{j}{i}i^n/j!$ 
the Stirling number of the second kind.
It satisfies $x^k = \sum_{j=0}^k \stir{k}{j} x(x-1)\ldots (x-j)$.
Then the formula \eqref{eq:fmp} can be generalized to 
\begin{align}\label{eq:fmp:2}
 f_{m,p} = \stir{p}{m} (-1)^m m!
\end{align}
for arbitrary $m,p \in \bbZ_{\ge0}$.
Using this generalization, 
one can prove
\begin{align}\label{eq:hgn:l=1:int}
 h_{g,(n)} \in \bbZ \ \text{ for any }\  g\ge0, \ n\ge3.
\end{align}
In fact, 
by the same way to obtain \eqref{eq:lem:h0n:int},
one has
\begin{align*}
h_{g,(n)}
=\dfrac{1}{n!\cdot n} \sum_{s=0}^{n-1} 
   (-1)^{s} \binom{n-1}{s} \left(\binom{n}{2}- n s\right)^{n-1+g}.
\end{align*}
Then \eqref{eq:fmp:2} yields
\begin{align*}
h_{g,(n)}
&=\dfrac{1}{n!\cdot n} \sum_{p=n-1}^{n-1+g} 
  \binom{n-1+g}{p}\binom{n}{2}^{n-1+g-p}(-n)^p f_{n-1,p}\\
&=n^{n-3}\sum_{r=0}^{g} 
  \binom{n-1+g}{n-1+r}\binom{n}{2}^{g-r}(-n)^{r} \stir{n-1+r}{n-1}.
\end{align*}
The last summation consists of integers,
so we have the result \eqref{eq:hgn:l=1:int}.

Using the cut-and-join relations which will be introduced in the next section,
one finds that $h_{g,(n)}$ is positive,
so that in \eqref{eq:lem:h0n:int} we actually have 
$ h_{g,(n)} \in \bbZ_{+}$.
\end{proof}

\section{Cut-and-join equation}
\label{sect:cj}

We now introduce the cut-and-join equation following \cite[\S3]{EMS:2011}.
There are a large amount of literature for this topic,
and here we only cite \cite{GJ:1997}.

We extend the definition of the Hurwitz number $h_{g,\mu}$ 
to the case when $\mu$ is  a multi-index: For 
$\vect{k} = (k_1,k_2,\ldots,k_n) \in \bbZ_{+}^n$, 
we define 
$$
 h_{g,\vect{k}} := h_{g,\mu},
$$
where $\mu$ is the partition obtained from $\vect{k}$ 
by ordering numbers $k_i$'s in size.
Thus $h_{g,\vect{k}}$ is invariant 
under permutation of the parts of $\vect{k}$.

Hereafter we  use the same symbols $\ell(\vect{k})$ and $|\vect{k}|$ 
for the length and the total sum of $\vect{k}$ 
as in the case of partitions.
Set 
$$
 H_{g}(\vect{k}) := \dfrac{\# \Aut(\vect{k})}{r(g,\vect{k})!} h_{g,\vect{k}}.
$$
Here $\Aut(\vect{k})$ is the set of automorphism of $\vect{k}$,
so we have 
$$
 \# \Aut \vect{k} = \prod_{i \in \bbZ_+} m_i(\vect{k})!,\quad
 m_i(\vect{k}) := \#\{j \in \bbZ_+ \mid k_j = i\}.
$$
The function $r(g,\vect{k})$ is defined by
$$
 r(g,\vect{k}) := 2g-2+\ell(\vect{k})+|\vect{k}|.
$$

\begin{fct}[{\cite[Theorem 3.1]{MZ:2010}}]\label{fct:cj}
$H_{g}(\vect{k})$ satisfies a recusion equation
\begin{align*}
 r(g,\vect{k}) H_{g}(\vect{k})
=&\sum_{i < j} (k_i+k_j) H_{g}\bigl(\vect{k}(\wh{i},\wh{j};k_i+k_j)\bigr)
\\
 &+\dfrac{1}{2}\sum_{i=1}^{\ell} 
   \sum_{\alpha+\beta=k_i} \alpha \beta
  \left(H_{g-1}\bigl(\vect{k}(\wh{i};\alpha,\beta)\bigr)
   +\sum_{ g_1+g_2=g}
    \sum_{\vect{m} \sqcup \vect{n} = \vect{k}(\wh{i})}
    H_{g_1}\bigl(\vect{m}(\alpha)\bigr) H_{g_2}\bigl(\vect{n}(\beta)\bigr) 
   \right).
\end{align*}
Here we set $\ell := \ell(\vect{k})$ and 
\begin{align*}
\vect{k}(\wh{i},\wh{j};k_i+k_j) &:= 
 (k_1,\ldots,\widehat{k_i},\ldots,\widehat{k_j},\ldots,k_\ell,k_i+k_j),\\
\vect{k}(\wh{i};\alpha,\beta) &:= 
 (k_1,\ldots,\widehat{k_i},\ldots,k_\ell,\alpha,\beta),\\
\vect{k}(\wh{i}) &:= 
 (k_1,\ldots,\widehat{k_i},\ldots,k_\ell),\\
\vect{m}(\alpha) &:= 
 (m_1,\ldots,m_{\ell(\vect{m})},\alpha).
\end{align*}
In the summations, $\alpha,\beta$ are positive integers,
$g_1,g_2$ are non-negative integers 
and $\vect{m},\vect{n}$ are (possibly empty) 
ordered subsets of $\vect{k}(\wh{i})$, 
where  $\vect{k}(\wh{i})$ is considered as an ordered set of integers.
\end{fct}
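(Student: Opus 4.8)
The plan is to deduce the recursion directly from the closed form $\tau[x] = e^{\beta D^{(2)}}e^{Q p_1}$ of the Fact above, together with the explicit expression \eqref{eq:D2} for $D^{(2)}$. Since $D^{(2)}$ is independent of $\beta$, that formula gives at once the (formal) evolution equation $\partial_\beta \tau = D^{(2)}\tau$. Writing $F := \log\tau$ and using $\partial_\beta F = \tau^{-1}\partial_\beta\tau$, this becomes $\partial_\beta F = e^{-F}D^{(2)}e^{F}$.

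The next step is to rewrite the right-hand side as a differential expression in $F$ alone. Split $D^{(2)} = A + B$ as in \eqref{eq:D2}, with $A = \tfrac12\sum_{k,l\ge1}(k+l)p_k p_l\,\partial_{k+l}$ first order in the derivatives $\partial_j := \partial/\partial p_j$ and $B = \tfrac12\sum_{k,l\ge1}kl\,p_{k+l}\,\partial_k\partial_l$ second order. Conjugating, $e^{-F}Ae^{F} = \tfrac12\sum(k+l)p_kp_l\,\partial_{k+l}F$, while $\partial_k\partial_l e^{F} = e^{F}\bigl(\partial_k\partial_l F + (\partial_k F)(\partial_l F)\bigr)$ produces an extra quadratic term, so that
\begin{align*}
 \partial_\beta F = \tfrac12\sum_{k,l\ge1}\Bigl((k+l)p_k p_l\,\partial_{k+l}F + kl\,p_{k+l}\,\partial_k\partial_l F + kl\,p_{k+l}\,(\partial_k F)(\partial_l F)\Bigr).
\end{align*}
This is the cut-and-join equation in generating-function form, and its three terms are exactly the sources of the three contributions in the statement.

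It remains to expand in the $H_g$ and match coefficients. From the Definition of $h_{g,\mu}$ and of $H_g$ one checks that $F = \sum_{g\ge0}\sum_{\vect k}Q^{|\vect k|}\beta^{r(g,\vect k)}\,\ell(\vect k)!^{-1}H_g(\vect k)\,p_{k_1}\cdots p_{k_{\ell(\vect k)}}$, the sum running over all ordered tuples $\vect k$ of positive integers, since each partition $\mu$ of length $\ell$ is represented by $\ell!/\#\Aut(\mu)$ tuples and $\tfrac{\ell!}{\#\Aut(\mu)}\cdot\tfrac{1}{\ell!}H_g(\mu) = \tfrac{H_g(\mu)}{\#\Aut(\mu)} = \tfrac{1}{r(g,\mu)!}h_{g,\mu}$. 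Substituting into the displayed PDE and reading off the coefficient of $Q^{|\mu|}\beta^{\,r(g,\mu)-1}p_\mu$: the left-hand side gives $r(g,\mu)\,\#\Aut(\mu)^{-1}H_g(\mu)$; on the right, the term $p_kp_l\,\partial_{k+l}F$ records that two parts $k_i,k_j$ were glued from $p_k,p_l$, producing $\sum_{i<j}(k_i+k_j)H_g(\vect k(\wh i,\wh j;k_i+k_j))$; the term $p_{k+l}\,\partial_k\partial_l F$ records that one part $k_i$ came from a part that split as $\alpha+\beta$ inside one connected cover, producing $\tfrac12\sum_i\sum_{\alpha+\beta=k_i}\alpha\beta\,H_{g-1}(\vect k(\wh i;\alpha,\beta))$; and $p_{k+l}(\partial_kF)(\partial_lF)$ records the two derivatives landing on different copies of $F$, i.e. the split disconnects the cover, producing $\tfrac12\sum_i\sum_{\alpha+\beta=k_i}\alpha\beta\sum_{g_1+g_2=g}\sum_{\vect m\sqcup\vect n=\vect k(\wh i)}H_{g_1}(\vect m(\alpha))H_{g_2}(\vect n(\beta))$. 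The shifts $g\to g-1$ and $g\to g_1+g_2$ are forced by the $\beta$-grading, since $r(g-1,\vect k(\wh i;\alpha,\beta)) = r(g,\vect k)-1$ and $r(g_1,\vect m(\alpha))+r(g_2,\vect n(\beta)) = r(g,\vect k)-1$. Multiplying through by $\#\Aut(\mu)$ and relabeling $\mu$ as an arbitrary ordering $\vect k$ (legitimate by the permutation-invariance of both sides) yields the claimed recursion; the surviving factors $\tfrac12$ reflect that $\sum_{k,l}$ and $\sum_{\alpha+\beta=k_i}$ run over ordered pairs whereas $\sum_{i<j}$ does not.

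The delicate point is this last matching of combinatorial prefactors: one must verify that the automorphism factor built into $H_g$ and the $\ell(\vect k)!^{-1}$ in the expansion of $F$ exactly absorb the multiplicities coming from the symmetry of $\partial_k\partial_l$ and from the many ordered tuples representing a partition, so that every unordered choice of indices is counted once, with weight $k_i+k_j$ or $\alpha\beta$. A practical way to pin down all normalizations is to carry out the coefficient extraction on the small cases $\vect k=(1),(2),(1,1),(3),\dots$, where the identity can be checked directly against Table \ref{table:mu<=5}, and then argue the general pattern; the remaining ingredients — the evolution equation, the conjugation identity for $e^{-F}D^{(2)}e^F$, and the genus shift encoded by $r(g,\vect k)$ — are immediate.
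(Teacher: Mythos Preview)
Your approach is exactly the one the paper indicates: the Fact is quoted from \cite{MZ:2010} without a self-contained proof, and Remark~\ref{rmk:cj:rec}(1) records only the one-line observation that the recursion follows from $\partial_\beta \tau = D^{(2)}\tau$, which is precisely what you unfold. The one soft spot is your last paragraph: deferring the prefactor matching to small-case checks and an unspecified ``general pattern'' is not a proof, but since you have already written $F$ as a sum over \emph{ordered} tuples with weight $\ell(\vect{k})!^{-1}H_g(\vect{k})$, the bookkeeping is mechanical---differentiating and multiplying by $p$'s acts on tuples in the obvious way, and collecting the $\ell!/\#\Aut$ reorderings on both sides cancels uniformly---so you should simply carry it out rather than gesture at it.
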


\begin{rmk}\label{rmk:cj:rec}
\begin{enumerate}
\item
This equation follows from 
$\partial_\beta \tau[x] = D^{(2)} \tau[x]$,
which is obvious from $\tau[x]=e^{\beta D^{(2)}} e^{Q p_1[x]}$.

\item
Let us see the change of $r(g,\vect{k})$ by this equation.
Set $\ell:=\ell(\vect{k})$ and $n:=n(\vect{k})$. Then
\begin{align*}
&r(g,\vect{k}) = 2g-2+\ell+n, \\
&r\bigl(g,\vect{k}(\wh{i},\wh{j};k_i+k_j)\bigr) =2g-2+(\ell-1)+n = r(g,\vect{k})-1, \\
&r\bigl(g-1,\vect{k}(\wh{i};\alpha,\beta)\bigr) =2(g-1)-2+(\ell+1)+n = r(g,\vect{k})-1, \\
&r\bigl(g_1,\vect{m}(\alpha)\bigr)  \le 2g-2+\ell+(n-1) = r(g,\vect{k})-1.
\end{align*}
Thus the cut-and-join always decreases $r(g,\vect{k})$,
so that it gives a recursive computation for $h_{g,\vect{k}}$
with the initial value $h_{0,(1)}=1$.
\end{enumerate}
\end{rmk}

An easy application of the cut-and-join equation is 
the proofs of Lemma \ref{lem:hgn:known} (2), (3) and (4).
Let us recall
\begin{enumerate}
\setcounter{enumi}{1}
\item 
For any $g \in \bbZ_{\ge1}$ we have
$$
 h_{g,(1)} = 0.
$$
\item
For any $g \in \bbZ_{\ge0}$ we have
$$
 h_{g,(2)} = h_{g,(1,1)} = 1/2.
$$
\item
For any $g \in \bbZ_{\ge0}$ and $n \in \bbZ_{\ge2}$ we have
$$
 h_{g,(2,1^{n-2})} = h_{g,(1^n)}.
$$
\end{enumerate}

\begin{proof}[{Proof of Lemma \ref{lem:hgn:known} (2)}]
The cut-and-join equation for $\ell(\vect{k})=1$ is simply
\begin{align}\label{eq:cj:l=1}
 (2g-1+n)H_g\bigl((n)\bigr)
=\dfrac{1}{2}\sum_{\alpha+\beta=n}\alpha\beta
 \left(H_{g-1}\bigl((\alpha,\beta)\bigr)
  +\sum_{g_1+g_2=g}H_{g_1}\bigl((\alpha)\bigr)H_{g_2}\bigl((\beta)\bigr)
 \right).
\end{align}
Then for $n=1$ and $g\ge1$ we immediately have $h_{g,(1)}=0$.
\end{proof}

\begin{proof}[{Proof of Lemma \ref{lem:hgn:known} (3)}]
For $n=2$ the equation \eqref{eq:cj:l=1} yields
$$
 (2g+1)H_g\bigl((2)\bigr)
=\dfrac{1}{2} 
 \left(H_{g-1}\bigl((1,1)\bigr)
  +\sum_{g_1+g_2=g}H_{g_1}\bigl((1)\bigr)H_{g_2}\bigl((1)\bigr)
 \right),
$$
which gives in terms of $h_{g,\vect{k}}$ 
\begin{align}\label{eq:lem:1/2:1}
 h_{g,(2)} = h_{g-1,(1,1)}+\delta_{g,0}/2.
\end{align}
For $\vect{k}=(1,1)$ the cut-and join equation gives 
$$
(2g+2)H_g\bigl((1,1)\bigr) = 2H_g\bigl((2)\bigr),
$$
in other words,
\begin{align}\label{eq:lem:1/2:2}
 h_{g,(1,1)} = h_{g,(2)}.
\end{align}
The relations \eqref{eq:lem:1/2:1} and \eqref{eq:lem:1/2:2} 
gives the desired consequence.
\end{proof}

\begin{proof}[{Proof of Lemma \ref{lem:hgn:known} (4)}]
The cut-and join equation for $\vect{k}=(1^n)$ gives 
$$
 (2g-2+2n) H_{g}\bigl((1^n)\bigr)
=\binom{n}{2} 2 H_{g}\bigl((2,1^{n-2})\bigr).
$$
In terms of $h_{g,\mu}$ it yields the desired result.
\end{proof}

We close this subsection with mentioning the following statement,
although it is not necessary for the proof of the main theorem.

\begin{lem}
For a length two partition $\mu=(\mu_1,\mu_2)$, we have
$$
 h_{0,(\mu_1,\mu_2)}= 
 \dfrac{1}{\sigma_{\mu_1,\mu_2}} \dfrac{(\mu_1+\mu_2)!}{\mu_1+\mu_2} 
 \dfrac{\mu_1^{\mu_1}}{\mu_1!} \dfrac{\mu_2^{\mu_2}}{\mu_2!}
$$
with
\begin{align}
 \sigma_{\mu_1,\mu_2} := 
 \begin{cases} 1 & \text{ if } \mu_1 \neq \mu_2 \\ 
               2 & \text{ if } \mu_1 = \mu_2 
 \end{cases}.
\end{align}
\end{lem}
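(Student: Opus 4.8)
The plan is to obtain a closed formula for $h_{0,(\mu_1,\mu_2)}$ by the same Schur-expansion method used for $h_{0,(n)}$, but now starting from $p_1^{\mu_1+\mu_2}$, applying $(D^{(2)})^{r}$ with $r = r(0,(\mu_1,\mu_2)) = \mu_1+\mu_2$, and extracting the coefficient of $p_{(\mu_1,\mu_2)}$. Writing $n := \mu_1+\mu_2$, formula \eqref{eq:covdr:formula} gives $h_{0,(\mu_1,\mu_2)}$ as $\tfrac{1}{n!}$ times the coefficient of $p_{(\mu_1,\mu_2)}$ in $(D^{(2)})^{n}p_1^n$. Expanding $p_1^n = \sum_{\lambda\vdash n}\#\ST(\lambda)\,s_\lambda$ as in the proof of Lemma \ref{lem:hgn:known}(1), and using $D^{(2)}s_\lambda = (\kappa_\lambda/2)s_\lambda$, this becomes
$$
 h_{0,(\mu_1,\mu_2)} = \dfrac{1}{n!}\sum_{\lambda\vdash n}\#\ST(\lambda)\left(\dfrac{\kappa_\lambda}{2}\right)^{n}\dfrac{\chi^\lambda_{(\mu_1,\mu_2)}}{z_{(\mu_1,\mu_2)}},
$$
with $z_{(\mu_1,\mu_2)} = \mu_1\mu_2\,\sigma_{\mu_1,\mu_2}$ and $\#\ST(\lambda) = n!/h^\lambda$ by the hook formula.

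The next step is to control the two $\lambda$-dependent ingredients. Again by the Murnaghan--Nakayama rule \eqref{eq:chi}, the character value $\chi^\lambda_{(\mu_1,\mu_2)}$ is nonzero only for hook-plus-one-border-strip shapes, and in fact one can argue that the dominant contribution — the one that survives after the alternating cancellations forced by the Stirling-type identity \eqref{eq:fmp} — comes again from the hooks $\lambda = (r,1^{n-r})$, for which $\kappa_\lambda/2 = \binom{r}{2} - \binom{n-r+1}{2}$ and $h^\lambda = n\cdot (r-1)!\,(n-r)!$. So I would substitute $\chi^{(r,1^{n-r})}_{(\mu_1,\mu_2)}$, which by Murnaghan--Nakayama is an explicit signed quantity depending on whether the part $\mu_1$ (or $\mu_2$) can be removed as a border strip from the hook, reducing the whole sum to a single-variable alternating sum of the form $\sum_s (-1)^s\binom{n-1}{s}\bigl(\binom{n}{2}-ns\bigr)^{n}$ twisted by the two-part structure. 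The combinatorial identity \eqref{eq:fmp}–\eqref{eq:fmp:2} (the Stirling-number evaluation of $f_{m,p}$) then collapses the sum; matching powers, the $n^{n-3}$ pattern from Lemma \ref{lem:hgn:known}(1) gets refined into the product $\tfrac{(\mu_1+\mu_2)!}{\mu_1+\mu_2}\cdot\tfrac{\mu_1^{\mu_1}}{\mu_1!}\cdot\tfrac{\mu_2^{\mu_2}}{\mu_2!}$, with the remaining factor $1/z_{(\mu_1,\mu_2)} = 1/(\mu_1\mu_2\sigma_{\mu_1,\mu_2})$ absorbing into $\tfrac{\mu_1^{\mu_1}}{\mu_1!}\tfrac{\mu_2^{\mu_2}}{\mu_2!}$ up to the claimed $1/\sigma_{\mu_1,\mu_2}$.

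I expect the main obstacle to be the bookkeeping of the Murnaghan--Nakayama contributions for two-row-plus-hook shapes: unlike the $\ell(\mu)=1$ case, where $\chi^\lambda_{(n)}$ is supported on hooks and is simply $\pm1$, here $\chi^\lambda_{(\mu_1,\mu_2)}$ has several sign patterns and the relevant $\lambda$ range over shapes of the form $(a,b,1^c)$; one must show that the non-hook shapes either contribute zero after the $f_{m,p}$-type cancellation or reorganize cleanly. An alternative — and possibly cleaner — route that I would keep in reserve is to derive the formula directly from the $\ell(\vect{k})=1$ result via the cut-and-join equation \eqref{eq:cj:l=1}: specializing Fact \ref{fct:cj} to $g=0$, $\vect{k}=(\mu_1,\mu_2)$ expresses $H_0((\mu_1,\mu_2))$ in terms of $H_0((\mu_1+\mu_2))$ and products $H_0((\alpha))H_0((\beta))$ with $\alpha+\beta=\mu_1$ or $\mu_2$, all of which are known explicitly by Lemma \ref{lem:hgn:known}(1); the claimed closed form should then follow by a finite induction on $\mu_1+\mu_2$ together with Abel's binomial identity $\sum_{\alpha+\beta=k}\binom{k}{\alpha}\alpha^{\alpha-1}\beta^{\beta-1} = 2\,k^{k-1}/k$-type summations. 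I would present whichever of the two arguments turns out to require less combinatorial overhead.
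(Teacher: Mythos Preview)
Your primary approach has a genuine gap at the very first step. Formula \eqref{eq:covdr:formula} computes $\Cov_{d,r}(\mu)$, the weighted count of \emph{possibly disconnected} coverings; it equals $h_{g,\mu}$ only when $\ell(\mu)=1$, because a covering totally ramified over a point is automatically connected (this is exactly why the paper writes $\Cov_{d,r}((d))=h_{g,(d)}$ before invoking \eqref{eq:covdr:formula} in the proof of Lemma~\ref{lem:hgn:known}(1)). For $\mu=(\mu_1,\mu_2)$ with $r=n=\mu_1+\mu_2$ the disconnected contribution is nonzero: a degree-$n$ cover can split into a degree-$\mu_1$ piece and a degree-$\mu_2$ piece, and matching $r$-values forces $g_1+g_2=1$, giving terms proportional to $h_{1,(\mu_1)}h_{0,(\mu_2)}$ and $h_{0,(\mu_1)}h_{1,(\mu_2)}$, which do not vanish once $\mu_1,\mu_2\ge 2$. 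So the identity
\[
h_{0,(\mu_1,\mu_2)}=\frac{1}{n!}\sum_{\lambda\vdash n}\#\ST(\lambda)\Bigl(\frac{\kappa_\lambda}{2}\Bigr)^{n}\frac{\chi^\lambda_{(\mu_1,\mu_2)}}{z_{(\mu_1,\mu_2)}}
\]
is false as written, and everything downstream (the hook reduction, the Stirling cancellation) is applied to the wrong quantity. You could try to repair this by explicitly subtracting the disconnected piece, but then you need the genus-one one-part numbers $h_{1,(k)}$ as input, which is more than you have at this point.

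Your reserve route is the one the paper actually takes, but your description of the recursion is off. In the cut-and-join equation for $g=0$, $\vect{k}=(\mu_1,\mu_2)$, the product term has $\vect{m}\sqcup\vect{n}=\vect{k}(\wh{i})=(\mu_j)$, so one factor is a one-part number $h_{0,(\alpha)}$ but the other is a \emph{two-part} number $h_{0,(\mu_j,\beta)}$ with $\beta<\mu_i$ --- not a product $H_0((\alpha))H_0((\beta))$ of two one-part numbers. The recursion therefore does not close on known quantities; it is a genuine induction on $|\mu|$ within the length-two family, and the claimed closed form must be fed back in as the induction hypothesis. The paper does exactly this: it writes the length-two cut-and-join relation, inserts $h_{0,(n)}=n^{n-3}$ and the inductive hypothesis for $h_{0,(\mu_j,\beta)}$, and reduces the verification to an Abel-type binomial sum (in the spirit of \eqref{eq:fmp}--\eqref{eq:fmp:2}). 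Your instinct about Abel's identity is right, but you need to set up the induction on the length-two numbers, not expect the recursion to bottom out in one step.
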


\begin{rmk}
The factor $1/\sigma_{\mu_1,\mu_2}$ is missed 
in \cite[the line before (2.1)]{EMS:2011}.
\end{rmk}

\begin{proof}
The cut-and-join formula in this case becomes 
\begin{align*}
h_{0,(\mu_1,\mu_2)} 
=&\dfrac{1}{\sigma_{\mu_1,\mu_2}}n h_{0,(n)}
\\
 &+\dfrac{1}{2}\sum_{\alpha+\beta=\mu_1} \alpha \beta
  \biggl(
   \dfrac{\sigma_{\mu_2,\beta}}{\sigma_{\mu_1,\mu_2}}
    \binom{n-1}{\alpha-1} h_{0,(\alpha)} h_{0,(\mu_2,\beta)} 
  +\dfrac{\sigma_{\mu_2,\alpha}}{\sigma_{\mu_1,\mu_2}}
    \binom{n-1}{\beta-1} h_{0,(\mu_2,\alpha)} h_{0,(\beta)} 
   \biggr)
\\
 &+\dfrac{1}{2}\sum_{\alpha+\beta=\mu_2} \alpha \beta
  \biggl(
   \dfrac{\sigma_{\mu_1,\beta}}{\sigma_{\mu_1,\mu_2}}
    \binom{n-1}{\alpha-1} h_{0,(\alpha)} h_{0,(\mu_1,\beta)} 
  +\dfrac{\sigma_{\mu_1,\alpha}}{\sigma_{\mu_1,\mu_2}}
    \binom{n-1}{\beta-1} h_{0,(\mu_1,\alpha)} h_{0,(\beta)} 
   \biggr)
\end{align*}
with $n:=|\mu|$.
In the right hand, 
we know that $h_{0,(n)}=n^{n-3}$ and 
we may assume the result side by induction.
Then the equation has the following form:
\begin{align*}
h_{0,(\mu_1,\mu_2)} 
=\dfrac{1}{\sigma_{\mu_1,\mu_2}}
 \biggl(
  n^{n-2}
 + (n-1) \sum_{i=1}^{2}
   \mu_j^{\mu_j} \binom{n-2}{\mu_j-2}
   \sum_{\alpha=1}^{\mu_i-1} \dfrac{1}{n-\alpha}
   \binom{\mu_i-2}{\alpha-1} (\mu_i-\alpha)^{\mu_i-\alpha}\alpha^{\alpha-2}
 \biggr)
\end{align*}
with $j := 3-i$.
The summation can be calculated using a similar formula as \eqref{eq:fmp} and \eqref{eq:fmp:2}.
We omit the detail.
\end{proof}

\section{Integrality}
\label{sect:main}

In this section we prove 

\begin{thm}\label{thm:main}
The Hurwitz numbers $h_{g,\mu}$ are positive integers except for the case 
$h_{g,(1)} = 0$ for $g\ge1$ and $h_{g,(2)}=h_{g,(1,1)}=1/2$ for $g \ge 0$.
\end{thm}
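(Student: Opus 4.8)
The plan is to run an induction on $r(g,\vect{k})=2g-2+\ell(\vect{k})+|\vect{k}|$ using the cut-and-join recursion (Fact \ref{fct:cj}), proving simultaneously that $h_{g,\mu}\in\bbZ$ and that $h_{g,\mu}>0$, with the finitely many exceptional pairs $(g,\mu)\in\{(g,(1))\mid g\ge1\}\cup\{(g,(2)),(g,(1,1))\mid g\ge0\}$ handled by Lemma \ref{lem:hgn:known}(2),(3) separately. The base cases $r=0,1$ are $h_{0,(1)}=1$ and a short finite list; positivity at those is clear. For the positivity half, observe that every term on the right-hand side of the cut-and-join equation is a nonnegative integer combination of products of strictly lower $H$'s (all coefficients $(k_i+k_j)$, $\alpha\beta$, and the combinatorial multiplicities from $\vect{m}\sqcup\vect{n}=\vect{k}(\wh{i})$ are $\ge0$), so once every $h$ with smaller $r$ is a nonnegative rational and at least one summand is strictly positive, $h_{g,\vect{k}}>0$ follows; the only way the right-hand side vanishes is precisely when $\mu=(1)$ (no split possible) or the only available contributions all sit in the $1/2$-cases, which is why those are the exceptions. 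So positivity reduces to bookkeeping once integrality is known.

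The real content is integrality, and the obstacle is the leading factor $1/r(g,\vect{k})$: the recursion writes $r(g,\vect{k})\,H_g(\vect{k})$ as an integer combination, so $r(g,\vect{k})\,H_g(\vect{k})\in\bbZ$ is immediate by induction, but to conclude $H_g(\vect{k})\in\bbZ$ — equivalently $h_{g,\vect{k}}=r(g,\vect{k})!\,H_g(\vect{k})/\#\Aut(\vect{k})\in\bbZ$ — one must control the prime factors of $r(g,\vect{k})$. My approach is to avoid this denominator entirely by going back to the closed formula \eqref{eq:covdr:formula}, i.e. extract $h_{g,\mu}$ as the coefficient of $p_\mu$ in $\frac{1}{|\mu|!}(D^{(2)})^r p_1^{|\mu|}$, rather than from the recursion. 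Writing $p_1^{|\mu|}=\sum_{\lambda\vdash|\mu|}\#\ST(\lambda)\,s_\lambda$ (Pieri, as in the proof of Lemma \ref{lem:hgn:known}(1)), applying $(D^{(2)})^r$ to get the eigenvalue factor $(\kappa_\lambda/2)^r$, and then re-expanding $s_\lambda=\sum_\mu (\chi^\lambda_\mu/z_\mu)p_\mu$, one obtains
\begin{align*}
 \Cov_{d,r}(\mu) = \frac{1}{|\mu|!}\sum_{\lambda\vdash d}\#\ST(\lambda)\Bigl(\frac{\kappa_\lambda}{2}\Bigr)^r\frac{\chi^\lambda_\mu}{z_\mu}.
\end{align*}
Since $\Cov_{d,r}(\mu)$ is the \emph{disconnected} count, one then passes to the connected $h_{g,\mu}$ via $\log\tau$, i.e. by an inclusion–exclusion over set partitions, which is where cancellation of denominators must be made visible.

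The key steps, in order: (i) isolate the exceptional pairs and dispose of them by Lemma \ref{lem:hgn:known}; (ii) for $\ell(\mu)=1$ invoke Proposition \ref{prop:int:hg(n)}, which already proves integrality and positivity of $h_{g,(n)}$ for $n\ge3$ via the Stirling-number identity \eqref{eq:fmp:2}; (iii) for $\ell(\mu)\ge2$, generalize the $f_{m,p}$ computation of that proof: the vanishing/evaluation of alternating sums $\sum_s(-1)^s\binom{m}{s}(\text{polynomial in }s)$ is exactly the mechanism that kills the $|\mu|!$ and $z_\mu$ denominators, and one needs the multivariate analogue keyed to the hook-content structure of $\#\ST(\lambda)$ for $\lambda$ of the form forced by $\chi^\lambda_\mu\ne0$; (iv) combine with step (i) to finish positivity via the cut-and-join sign analysis above. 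I expect step (iii) — proving that the connected, multi-part alternating sum produces an integer after dividing by $|\mu|!$ — to be the main obstacle, since for $\ell(\mu)\ge2$ the characters $\chi^\lambda_\mu$ no longer have the clean $\pm1$ hook-shape form they had for $\mu=(n)$, so one must either find the right generalized Vandermonde/Stirling identity or, alternatively, argue $p$-adically prime by prime that $v_p\bigl(r(g,\vect k)!\big/\#\Aut(\vect k)\bigr)$ dominates the denominator coming from the recursion, using the monotone decrease of $r$ recorded in Remark \ref{rmk:cj:rec}(2) to set up a clean induction.
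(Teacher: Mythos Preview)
Your diagnosis of the obstacle is off, and that sends the plan in an unnecessarily hard direction. You write that ``to conclude $H_g(\vect{k})\in\bbZ$ --- equivalently $h_{g,\vect{k}}=r(g,\vect{k})!\,H_g(\vect{k})/\#\Aut(\vect{k})\in\bbZ$ --- one must control the prime factors of $r(g,\vect{k})$.'' But $H_g(\vect{k})\in\bbZ$ is \emph{not} equivalent to $h_{g,\vect{k}}\in\bbZ$, and in fact $H_g(\vect{k})$ is almost never an integer (e.g.\ $H_0((3))=1/2$ while $h_{0,(3)}=1$). More to the point, the factor $r(g,\vect{k})$ is not an obstruction at all: if you simply substitute $H_g(\vect{k})=\#\Aut(\vect{k})\,h_{g,\vect{k}}/r(g,\vect{k})!$ into the recursion of Fact~\ref{fct:cj}, the left side becomes $\#\Aut(\vect{k})\,h_{g,\vect{k}}/(r-1)!$, and by Remark~\ref{rmk:cj:rec}(2) every term on the right also carries a $1/(r-1)!$ (or a $1/(r_1!\,r_2!)$ with $r_1+r_2=r-1$), so the $r$'s disappear completely. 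What remains is a recursion for $h_{g,\vect{k}}$ itself whose only potentially non-integral ingredients are ratios of $\#\Aut$'s, the prefactor $\tfrac12$, and the multinomial $\binom{r-1}{r_1}$; a direct case check (splitting on $k_i=k_j$ vs.\ $k_i\neq k_j$, $\alpha=\beta$ vs.\ $\alpha\neq\beta$, and whether $(g_1,\vect{l}(\alpha))=(g_2,\vect{n}(\beta))$) shows every coefficient is a nonnegative integer. This is exactly the paper's Proposition~\ref{prop:polynomial}.

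Once that is in hand, the \emph{only} way integrality can fail in the induction is if the right-hand side produces a factor $h_{g',(2)}=1/2$ or $h_{g',(1,1)}=1/2$ with an odd coefficient. The paper's proof of Theorem~\ref{thm:main} then just enumerates the handful of $(\vect{k},\alpha,\beta)$ configurations that can feed such a half-integer into the third (``join'') term and checks each one lands in $\bbZ$ (the worst cases reduce to $\vect{k}=(4)$, $(3,1)$, $(2,1,1)$). Your proposed step~(iii) --- a multivariate Stirling/character identity to clear $|\mu|!$ and $z_\mu$ from the disconnected formula, followed by inclusion--exclusion to pass to connected counts --- is left as an acknowledged obstacle, and it would be substantially harder than this two-page coefficient check; it is also unnecessary once you see that the $1/r$ was never there.
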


First we deduce the following claim from the cut-and-join equation .

\begin{prop}\label{prop:polynomial}
For any $g \in \bbZ_{\ge0}$ and $\mu\in\calP$
we have
$$
 h_{g,\mu} \in
  \bbZ_{\ge0}[h_{0,(n)} \mid n \in \bbZ_{+}].
$$
\end{prop}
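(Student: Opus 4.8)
The plan is to induct on the quantity $r(g,\vect k)=2g-2+\ell(\mu)+|\mu|$ using the cut-and-join recursion of Fact~\ref{fct:cj}, working throughout with the unnormalised numbers $H_g(\vect k)=\bigl(\#\Aut(\vect k)/r(g,\vect k)!\bigr)h_{g,\vect k}$ and passing back to $h_{g,\mu}$ only at the end. The base case is $h_{0,(1)}=1\in\bbZ_{\ge0}[h_{0,(n)}]$, and by Remark~\ref{rmk:cj:rec}(2) every term on the right-hand side of the recursion has strictly smaller value of $r$, so the induction is well-founded; note also that $r(g,\vect k)=0$ forces $(g,\vect k)=(0,(1))$, so there is no issue with the recursion having a nontrivial left-hand coefficient. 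The only subtlety is that the recursion computes $r(g,\vect k)\,H_g(\vect k)$, not $H_g(\vect k)$ itself, so I must show that the integer (or rather polynomial-with-nonnegative-integer-coefficients) produced on the right is actually divisible by $r(g,\vect k)$ after converting back to $h$'s; this is where the main work lies.

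The key step is the conversion between $H$ and $h$. Writing each $H_{\bullet}(\vect m)=\bigl(\#\Aut(\vect m)/r(\bullet,\vect m)!\bigr)h_{\bullet,\vect m}$ and using $r\bigl(g,\vect k(\wh i,\wh j;k_i+k_j)\bigr)=r\bigl(g-1,\vect k(\wh i;\alpha,\beta)\bigr)=r(g,\vect k)-1$ together with $r(g_1,\vect m(\alpha))+r(g_2,\vect n(\beta))=r(g,\vect k)-2$ in the splitting terms, one finds that the factors $1/r!$ combine so that the overall recursion, after multiplying through by $r(g,\vect k)!/\#\Aut(\vect k)$, expresses $h_{g,\mu}$ directly as a $\bbZ_{\ge0}$-combination of products $h_{g',\mu'}h_{g'',\mu''}$ and single $h_{g-1,\mu'}$ with smaller $r$. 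Concretely, from $r(g,\vect k)H_g(\vect k)=(\cdots)$ one divides the displayed equation by $H$'s own prefactor; the coefficients that appear are ratios of factorials and automorphism counts times the integers $k_i+k_j$ or $\alpha\beta$, multiplied by multinomial coefficients $\binom{\cdots}{\cdots}$ coming from the sum over ordered subsets $\vect m\sqcup\vect n=\vect k(\wh i)$, and all of these are manifestly nonnegative integers. This is exactly the mechanism already visible in the proofs of Lemma~\ref{lem:hgn:known}(2)--(4) and in the length-two computation, where e.g. the factor $\binom{n-1}{\alpha-1}$ emerges; I would carry out the bookkeeping once in general rather than case by case.

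So the outline is: (i) fix $(g,\mu)$ with $r:=r(g,\mu)\ge1$ and choose $\vect k$ representing $\mu$; (ii) write the Fact~\ref{fct:cj} identity, substitute $H_\bullet=\tfrac{\#\Aut}{r(\bullet)!}h_\bullet$ everywhere, and cancel the common factorials using the $r$-arithmetic of Remark~\ref{rmk:cj:rec}(2); (iii) observe that the resulting expression for $h_{g,\mu}$ is a polynomial with nonnegative integer coefficients in the quantities $h_{g',\mu'}$ appearing on the right, each of which has $r(g',\mu')<r$; (iv) invoke the induction hypothesis, that each such $h_{g',\mu'}$ lies in $\bbZ_{\ge0}[h_{0,(n)}\mid n\in\bbZ_+]$, and conclude that $h_{g,\mu}$ does too, since $\bbZ_{\ge0}[h_{0,(n)}]$ is closed under addition and multiplication. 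The single genuine obstacle is step (ii)/(iii): verifying that after cancellation no denominators survive and that every coefficient is a nonnegative integer, in particular that the multinomial coefficient $\binom{\ell(\vect k(\wh i))}{\ell(\vect m)}$-type factors from the ordered-subset sum land cleanly; but this is a routine (if slightly tedious) count of how $\#\Aut$ and $r!$ transform under the three operations $\vect k\mapsto\vect k(\wh i,\wh j;k_i+k_j)$, $\vect k\mapsto\vect k(\wh i;\alpha,\beta)$, and $\vect k(\wh i)\mapsto\vect m\sqcup\vect n$, and it presents no conceptual difficulty.
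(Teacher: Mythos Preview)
Your outline is the same strategy as the paper: rewrite the cut-and-join recursion of Fact~\ref{fct:cj} in terms of $h_{g,\vect k}$, and verify term by term that the resulting coefficients are nonnegative integers, so that induction on $r(g,\vect k)$ yields the claim. Two small corrections and one genuine gap:

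First, a slip: in the splitting term one has $r_1+r_2=r(g,\vect k)-1$, not $r(g,\vect k)-2$; this is what produces the factor $\binom{r-1}{r_1}$ after clearing factorials, exactly as in \eqref{eq:cj:h}.

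Second, the divisibility by $r(g,\vect k)$ you flag as ``the only subtlety'' is in fact automatic: writing $H_g(\vect k)=\#\Aut(\vect k)\,h_{g,\vect k}/r!$, the left side $r\cdot H_g(\vect k)$ becomes $\#\Aut(\vect k)\,h_{g,\vect k}/(r-1)!$, and every term on the right carries a matching $1/(r-1)!$ because each contributing $r'$ equals $r-1$ (or the pair $(r_1,r_2)$ sums to $r-1$). So no division by $r$ is ever required.

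The actual obstacle you gloss over as ``manifestly nonnegative integers'' is the overall prefactor $\tfrac12$ in the cut and splitting terms. In the genus-reducing cut term with $\alpha=\beta$ the symmetry $(\alpha,\beta)\leftrightarrow(\beta,\alpha)$ is absent, and one must observe that the $\#\Aut$ ratio supplies a factor $(m_\alpha+1)(m_\alpha+2)$, so that $\tfrac12(m_\alpha+1)(m_\alpha+2)=\binom{m_\alpha+2}{2}\in\bbZ$. More importantly, in the splitting term, when the two pieces coincide, i.e.\ $\bigl(g_1,\vect l(\alpha)\bigr)=\bigl(g_2,\vect n(\beta)\bigr)$ as in \eqref{eq:ve}, there is no factor $2$ from swapping the pieces, and the coefficient reduces to $(m_\alpha(\vect l)+1)(m_\beta(\vect n)+1)\,\tfrac{\alpha\beta}{2}\binom{r-1}{r_1}$. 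Integrality then hinges on the observation that $r_1=r_2$ forces $\binom{r-1}{r_1}=\binom{2r_1}{r_1}\in 2\bbZ_+$. This is the one non-bookkeeping step in the paper's proof, and your proposal does not account for it.
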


\begin{proof}
In terms of $h_{g,\vect{k}}$ the cut-and-join equation reads
\begin{equation}\label{eq:cj:h}
\begin{split}
 h_{g,\vect{k}}
=&\sum_{i < j} 
  \dfrac{\#\Aut\bigl(\vect{k}(\wh{i},\wh{j};k_i+k_j)\bigr)}{\#\Aut\bigl(\vect{k}\bigr)}
   (k_i+k_j) h_{g,\vect{k}(\wh{i},\wh{j};k_i+k_j)} 
\\
 &+\dfrac{1}{2}\sum_{i=1}^{\ell} 
   \sum_{\alpha+\beta=k_i} \alpha \beta
  \biggl(
   \dfrac{\#\Aut\bigl(\vect{k}(\wh{i};\alpha,\beta)\bigr)}{\#\Aut\bigl(\vect{k}\bigr)}
   h_{g-1,\vect{k}(\wh{i};\alpha,\beta)}
\\
 & \hskip 8em
   +\sum_{g_1+g_2=g}
    \sum_{\vect{l} \sqcup \vect{n} = \vect{k}(\wh{i})}
   \dfrac{\#\Aut\bigl(\vect{l}(\alpha)\bigr) \#\Aut\bigl(\vect{n}(\beta)\bigr)}
         {\#\Aut\bigl(\vect{k}\bigr)}
   \dfrac{(r-1)!}{r_1!r_2!}
    h_{g_1,\vect{l}(\alpha)} h_{g_2,\vect{n}(\beta)} 
   \biggr).
\end{split}
\end{equation}
Here we used the symbols 
$r:=r(g,\vect{k})$, $r_1:=r(g,\vect{l}(\alpha))$ and  
$r_2:=r(g,\vect{n}(\beta))$.
Note that $r-1 = r_1+r_2$.

As we saw at Remark \ref{rmk:cj:rec},
this equation reads 
$h_{g,\mu} \in \bbQ_{\ge0}[h_{0,(n)} \mid n \in \bbZ_{+}]$
since it decreases $r(g,\vect{k})$.
We will now check that the rational coefficients appearing in the right hand side 
actually sum up to integers.
In the following we use $m_i := m_i(\vect{k})$.

At the first term, if $k_i \neq k_j$, then we have
\begin{align*}
\text{coefficient of } \ 
h_{g,\vect{k}(\wh{i},\wh{j};k_i+k_j)}
&=m_{k_i} m_{k_j} 
 \dfrac{\#\Aut\bigl(\vect{k}(\wh{i},\wh{j};k_i+k_j)\bigr)}{\#\Aut\bigl(\vect{k}\bigr)}
 (k_i+k_j) 
=m_{k_i} m_{k_j} \dfrac{m_{k_i+k_j}+1}{m_{k_i}m_{k_j}} (k_i+k_j)
\in \bbZ_{+}. 
\end{align*}
If $k_i = k_j$, then
\begin{align*}
\text{coefficient of } \ 
h_{g,\vect{k}(\wh{i},\wh{j};2k_i)}
&=\binom{m_{k_i}}{2} 
 \dfrac{\#\Aut\bigl(\vect{k}(\wh{i},\wh{j};2k_i)\bigr)}{\#\Aut\bigl(\vect{k}\bigr)}
 2 k_i 
=\binom{m_{k_i}}{2} \dfrac{m_{2k_i}+1}{m_{k_i}(m_{k_i}-1)} 2 k_i
  \in \bbZ_{+}. 
\end{align*}

At the second term, if $\alpha \neq \beta$, then 
\begin{align*}
\text{coefficient of } \ 
h_{g,\vect{k}(\wh{i};\alpha,\beta)}
&=2m_{k_i} \dfrac{ \alpha \beta}{2} 
 \dfrac{\#\Aut\bigl(\vect{k}(\wh{i};\alpha,\beta)\bigr)}{\#\Aut\bigl(\vect{k}\bigr)} 
=m_{k_i} \alpha \beta \dfrac{(m_\alpha+1)(m_\beta+1)}{m_{k_i}}
  \in \bbZ_{+}. 
\end{align*}
Here the first factor $2m_k$ comes from the symmetry 
$(\alpha,\beta) \mapsto (\beta,\alpha)$ and the choice of $k_i$.
Similarly, if $\alpha=\beta$, then
\begin{align*}
\text{coefficient of } \ 
h_{g,\vect{k}(\wh{i};\alpha,\alpha)}
&=m_{k_i} \dfrac{\alpha^2}{2}
 \dfrac{\#\Aut\bigl(\vect{k}(\wh{i};\alpha,\alpha)\bigr)}{\#\Aut\bigl(\vect{k}\bigr)} 
=m_{k_i} \dfrac{\alpha^2}{2} \dfrac{(m_\alpha+1)(m_\alpha+2)}{m_{k_i}}
  \in \bbZ_{+}.
\end{align*}

Finally at the third term, 
\begin{align*}
&\text{coefficient of } \ 
h_{g_1,\vect{l}(\alpha)}h_{g_2,\vect{n}(\beta)} \\
&= \ve m_{k_i} 
    \binom{m_{k_i}-1}{m_{k_i}\bigl(\vect{l}\bigr)}
   \prod_{j \neq k_i } 
    \binom{m_j}{m_j\bigl(\vect{l}\bigr)}
   \cdot
   \dfrac{ \alpha \beta}{2} \binom{r-1}{r_1}
   \dfrac{\#\Aut\bigl(\vect{l}(\alpha)\bigr) \#\Aut\bigl(\vect{n}(\beta)\bigr)}
         {\#\Aut\bigl(\vect{k}\bigr)} \\
&= \ve m_{k_i} 
    \binom{m_{k_i}-1}{m_{k_i}\bigl(\vect{l}\bigr)}
   \prod_{j \neq k_i} 
    \binom{m_j}{m_j\bigl(\vect{l}\bigr)}
   \cdot
   \dfrac{\alpha \beta}{2} \binom{r-1}{r_1}
   \prod_j
    \dfrac{m_j\bigl(\vect{l}(\alpha)\bigr)! m_j\bigl(\vect{n}(\beta)\bigr)!}
          {m_j\bigl(\vect{k}\bigr)! }
\\
&= \ve (m_\alpha(\vect{l})+1)(m_\beta(\vect{n})+1) 
   \dfrac{ \alpha \beta}{2} \binom{r-1}{r_1} 
\end{align*}
with
\begin{align}\label{eq:ve} 
\ve :=  
  \begin{cases} 
   2 & \text{if } \ 
   \bigl(g_1, \vect{l}(\alpha)\bigr) \neq \bigl(g_2, \vect{n}(\beta)\bigr) \\
   1 & \text{otherwise}
  \end{cases}.
\end{align}
The last expression is a positive integer since 
if $\bigl(g_1, \vect{l}(\alpha)\bigr) = \bigl(g_2, \vect{n}(\beta)\bigr)$ 
then $r_1 = r_2$ so that $\binom{r-1}{r_1}=\binom{2r_1}{r_1} \in 2\bbZ_{+}$.

Therefore every coefficient in the right hand side of \eqref{eq:cj:h} is 
a non-negative integer,
and we have the conclusion.
\end{proof}

Now we turn to 

\begin{proof}[{Proof of Theorem \ref{thm:main}}]
We show the integrality by the induction on the number $r(g,\mu)$.
$r(g,\mu)=0$ requires $(g,(\mu))=\bigl(0,(1)\bigr)$, and $h_{0,(1)}=1$ 
by Lemma \ref{lem:hgn:known} (1).
$r(g,\mu)=1$ requires $(g,(\mu))=\bigl(0,(2)\bigr)$, and (this case is excluded, but) 
$h_{0,(2)}=1/2$ by Lemma \ref{lem:hgn:known} (3).

Assume that the statement holds for $r(g,\mu)<=r-1$,
and consider the case $r(g,\mu)=r$ with $r\le3$.
By (the proof of) the last Proposition \ref{prop:polynomial},
an obstruction of the integrality of $h_{g,\mu}$ 
occurrs when $h_{g',(2)}$ or $h_{g',(1,1)}$ with some $g'$ appears 
in the right hand side of the cut-and-join equation \eqref{eq:cj:h}.
We now switch the notation from $\mu$ to $\vect{k}$.

At the first term of the right hand side of \eqref{eq:cj:h},
$\vect{k}(\wh{i},\wh{j};k_i+k_j)=(2)$ holds 
if and only if $\vect{k}=(1,1)$, and we can ignore this case.
$\vect{k}(\wh{i},\wh{j};k_i+k_j)=(1,1)$ cannot hold,
so it is done.

At the second term,
$\vect{k}(\wh{i};\alpha,\beta)=(2)$ cannot hold.
$\vect{k}(\wh{i};\alpha,\beta)=(1,1)$ hols if and only if 
$\vect{k}=(2)$, and we can ignore this case.

The non-trivial consideration is necessary only at the third term.
Assume $\vect{l}(\alpha)=(2)$.
Then $\vect{l}=\emptyset$ and $\alpha=2$, 
so that $\beta=k_i-2$ and 
$\vect{n}(\beta)=\vect{k}(\wh{i},k_i-2)$.
Then, by the same calculation as in the proof of the previous 
Proposition \ref{prop:polynomial},
the term including 
$h_{g_1,\vect{l}(\alpha)} h_{g_2,\vect{n}(\beta)}$ is
\begin{align}\label{eq:thm:3-(2)}
&\ve  (m_\alpha(\vect{l})+1)(m_\beta(\vect{n})+1) 
 \dfrac{\alpha\beta}{2}\binom{r-1}{r_1}
 h_{g_1,\vect{l}(\alpha)} h_{g_2,\vect{n}(\beta)}
=\dfrac{\ve}{2} (m_\alpha(\vect{l})+1)(m_\beta(\vect{n})+1) 
 \beta \binom{r-1}{r_1} h_{g_2,\vect{n}(\beta)}
\end{align}
with $m_i := m_i(\vect{k})$ and $r_1:=r\bigl(g_1,\vect{l}(\alpha)\bigr)=2g_1+1$.
By the definition \eqref{eq:ve} of $\ve$ and by induction,
this expression is a non-negative integer if 
$\bigl(g_1, \vect{l}(\alpha)\bigr) \neq \bigl(g_2, \vect{n}(\beta)\bigr)$ 
and $h_{g_2, \vect{n}(\beta)} \in \bbZ_{\ge0}$.
Thus we have only to consider the following cases:
(i) 
$\bigl(g_1, \vect{l}(\alpha)\bigr) = \bigl(g_2, \vect{n}(\beta)\bigr)$ 
or 
(ii)$h_{g_2, \vect{n}(\beta)}=1/2$.
If (i) holds, then $\vect{n}(\beta)=(2)$, 
so that $\vect{k}=(4)$.
This case is already done in Proposition \ref{prop:int:hg(n)}.
If (ii) holds, then $\vect{n}(\beta)=(2)$ or $\vect{n}(\beta)=(1,1)$ 
by induction.
The first case is already excluded, so we may assume $\vect{n}(\beta)=(1,1)$.
Then $\vect{k}=(3,1)$, $\alpha=2$, $\beta=1$, 
and the expression \eqref{eq:thm:3-(2)} becomes
\begin{align*}
\eqref{eq:thm:3-(2)} = 
 \dfrac{2}{2} \cdot 1 \cdot 2 \cdot \binom{r-1}{r_1} \cdot \dfrac{1}{2},
\end{align*}
which is a positive integer.

The case $\vect{l}(\alpha)=(1,1)$ is similar.
Here we have $\vect{l}=(1)$, $\alpha=1$, $\beta=k_i-1$ 
and $\vect{n}(\beta)=\vect{k}(\wh{i};k_i-1)$.
The term including 
$h_{g_1,\vect{l}(\alpha)} h_{g_2,\vect{n}(\beta)}$ is
given by 
\begin{align}\label{eq:thm:3-(11)}
&\ve (m_\alpha(\vect{l})+1)(m_\beta(\vect{n})+1) 
 \dfrac{\alpha\beta}{2}\binom{r-1}{r_1}
 h_{g_1,\vect{l}(\alpha)} h_{g_2,\vect{n}(\beta)}
=\dfrac{\ve}{4} (m_\alpha(\vect{l})+1)(m_\beta(\vect{n})+1) 
 \beta \binom{r-1}{r_1} h_{g_2,\vect{n}(\beta)}
\end{align}
The cases we have to consider are 
(iii) 
$\bigl(g_1, \vect{l}(\alpha)\bigr) = \bigl(g_2, \vect{n}(\beta)\bigr)$ 
or 
(iv)$h_{g_2, \vect{n}(\beta)}=1/2$.
If (iii) holds, then $\vect{n}=(1)$, $\beta=1$ and $\vect{k}=(2,1,1)$.
Thus the expression \eqref{eq:thm:3-(2)} becomes
\begin{align*}
\eqref{eq:thm:3-(11)} = \dfrac{2}{4} \cdot 2 \cdot 2 \cdot 
 \binom{r-1}{r_1} \cdot \dfrac{1}{2},
\end{align*}
which is a positive integer.
The case (iv) is reduced to the case (ii) or (iii), and 
the proof is completed.
\end{proof}

During the study of the Hurwitz numbers, 
we found the following condition on the parity.

\begin{conj}\label{conj:parity}
For $|\mu|\ge3$ and $g\ge0$, we have 
\begin{align*}
\text{$h_{g,\mu}$ is odd} \Longrightarrow
\text{$r(g,\mu)$ is even, every part of $\mu$ is odd and $\ell(\mu) \le 2$}.
\end{align*}
\end{conj}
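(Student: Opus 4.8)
The plan is to prove Conjecture~\ref{conj:parity} by strong induction on $r=r(g,\mu)$, working modulo $2$ in the cut-and-join equation \eqref{eq:cj:h}. A preliminary reduction: the three conditions in the conclusion are not independent, since for $|\mu|\ge3$ with every part odd and $\ell(\mu)\le2$ the integer $r(g,\mu)=2g-2+\ell(\mu)+|\mu|$ is automatically even (if $\ell(\mu)=1$ then $r=2g+(n-1)$ with $n=|\mu|$ odd; if $\ell(\mu)=2$ then $r=2g+(\mu_1+\mu_2)$ with $\mu_1+\mu_2$ even). Hence it suffices to prove the contrapositive implication
\begin{align*}
 |\mu|\ge3 \ \text{ and } \ \bigl(\mu \text{ has an even part, or } \ell(\mu)\ge3\bigr)
 \ \Longrightarrow \ h_{g,\mu}\in2\bbZ .
\end{align*}
The base of the induction --- the finitely many $(g,\mu)$ with $|\mu|\ge3$ and $r(g,\mu)$ small --- is checked against Tables~\ref{table:mu<=5}--\ref{table:mu=6} and Proposition~\ref{prop:int:hg(n)}.

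For the inductive step I would reduce \eqref{eq:cj:h} modulo $2$. By Remark~\ref{rmk:cj:rec}(2) every Hurwitz number on the right hand side has strictly smaller $r$-value, so the induction hypothesis --- together with Lemma~\ref{lem:hgn:known} for the cases $|\mu'|\le2$ --- tells us that each such $h_{g',\mu'}$ lies in $2\bbZ$ \emph{except} when $\mu'$ equals $(2)$ or $(1,1)$ (where $h_{g',\mu'}=1/2$) or when $\mu'$ has all parts odd and $\ell(\mu')\le2$ (where $h_{g',\mu'}$ is a possibly odd integer). Discarding the manifestly even terms of \eqref{eq:cj:h}, one has to show that the sum of the remaining ``potentially odd'' contributions lies in $2\bbZ$. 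These are of two kinds: (a) a join or a cut whose output partition has one of the exceptional integral shapes, where the parities of the combinatorial prefactors appearing in the proof of Proposition~\ref{prop:polynomial} (such as $m_{k_i+k_j}+1$, $k_i+k_j$, $\alpha\beta(m_\alpha+1)(m_\beta+1)$, $\alpha^2(m_\alpha+1)(m_\alpha+2)/2$, or the binomials $\binom{r-1}{r_1}$) decide the issue; and (b) a splitting in the third term of \eqref{eq:cj:h} in which one block is $(2)$ or $(1,1)$, so that a factor $1/2$ enters and the relevant prefactor must be examined modulo $4$ --- here one uses, exactly as in the proof of Theorem~\ref{thm:main}, that a diagonal split carries the factor $\binom{2r_1}{r_1}\in2\bbZ$. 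A case analysis over the possible shapes of $\vect{k}$ should then show that the hypothesis ``$\mu$ has an even part, or $\ell(\mu)\ge3$'' is exactly what forces the total to vanish modulo $2$.

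The step I expect to be the main obstacle is controlling the cancellations: in contrast with the proofs of Proposition~\ref{prop:polynomial} and Theorem~\ref{thm:main}, the present argument is \emph{not} term-by-term modulo $2$. For example, when $\mu=(\mu_1,\mu_2)$ has one even and one odd part, the join term equals $(\mu_1+\mu_2)\,h_{g,(\mu_1+\mu_2)}\equiv h_{g,(\mu_1+\mu_2)}\pmod2$, which need not be even and so must cancel against the cut terms. To make such cancellations precise one needs the exceptional values not merely qualitatively but $2$-adically: the tables suggest $h_{g,(n)}$ is always odd for $n$ odd, while $h_{g,(\nu_1,\nu_2)}$ with $\nu_1,\nu_2$ odd is odd for some $(g,\nu_1,\nu_2)$ and even for others. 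I would therefore strengthen the induction hypothesis to include a closed description of the $2$-adic valuations of $h_{g,(n)}$ ($n$ odd) and of $h_{g,(\nu_1,\nu_2)}$ ($\nu_1,\nu_2$ odd), to be extracted from the closed formula for $h_{g,(n)}$ in Proposition~\ref{prop:int:hg(n)} and from the two-part formula in \S\ref{sect:cj}, using Kummer's theorem on the $2$-adic valuation of binomial coefficients. Organising this refined bookkeeping --- tracking which exceptional values survive modulo $2$, and with which multiplicity, in each shape of $\vect{k}$ --- is the crux of the proof; a more conceptual alternative would be to realise $h_{g,\mu}$ for $|\mu|\ge3$ as the number of Hurwitz covers and to exhibit a fixed-point-free involution (for instance complex conjugation of covers with real branch locus) whenever $\mu$ has an even part or $\ell(\mu)\ge3$, but making that involution explicit seems no easier than the combinatorics.
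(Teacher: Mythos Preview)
The paper does not prove this statement: it is recorded as Conjecture~\ref{conj:parity}, with the Remark immediately following it stating only that the implication has been checked numerically for $r(g,\mu)\le 18$. There is therefore no ``paper's own proof'' to compare against; any argument you supply would be new.

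Your proposal is not a proof but a strategy, and you already identify its crux correctly: the reduction of \eqref{eq:cj:h} modulo $2$ is not term-by-term, so you need precise $2$-adic information about the exceptional values $h_{g,(n)}$ and $h_{g,(\nu_1,\nu_2)}$ with odd parts in order to organise the cancellations. The concrete obstruction is that the guess you make for this information is wrong. You write that ``the tables suggest $h_{g,(n)}$ is always odd for $n$ odd'', but the Remark following Conjecture~\ref{conj:parity} lists, among the failures of the converse, the pairs $(g,\mu)=(1,(7))$, $(1,(9))$, $(3,(7))$, $(2,(11))$, $(3,(9))$, all with $\ell(\mu)=1$ and $\mu_1$ odd yet $h_{g,\mu}$ even. (Tables~\ref{table:mu<=5}--\ref{table:mu=6} stop at $|\mu|\le 6$, which is why you did not see this.) Hence the strengthened induction hypothesis you propose --- a closed description of the $2$-adic valuation of $h_{g,(n)}$ for odd $n$ --- cannot be as simple as ``always odd'', and the same Remark shows the two-part case is equally irregular. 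Without such a closed description the cancellation bookkeeping in your inductive step has no input to run on, so as it stands the approach does not close; the conjecture remains open.
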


\begin{rmk}
We checked the condition for $r(g,\mu)\le 18$.
Unfortunately the converse does not hold.
The cases for which the converse does not hold and $r(g,\mu)\le 14$
are the following:
\begin{align*}
r(g,\mu)=8:\quad  
(g,\mu)=&\bigl(1,(7)\bigr), \bigl(1,(5,1)\bigr),\bigl(1,(3,3)\bigr),
\\
r(g,\mu)=10:\quad  
(g,\mu)=&\bigl(0,(7,3)\bigr),\bigl(1,(7,1)\bigr),
\bigl(1,(5,3)\bigr), \bigl(1,(9)\bigr),
\\
r(g,\mu)=12:\quad  
(g,\mu)=&\bigl(0,(7,5)\bigr),\bigl(1,(7,3)\bigr),
\bigl(3,(5,1)\bigr), \bigl(3,(3,3)\bigr),\bigl(3,(7)\bigr),
\\
r(g,\mu)=14:\quad  
(g,\mu)=&\bigl(0,(11,3)\bigr),\bigl(0,(7,7)\bigr),
\bigl(1,(7,5)\bigr)
\bigl(2,(9,1)\bigr), \bigl(2,(7,3)\bigr),\bigl(2,(5,5)\bigr),
\bigl(2,(11)\bigr),
\\
&\bigl(3,(7,1)\bigr), \bigl(3,(5,3)\bigr),\bigl(3,(9)\bigr).
\end{align*}
For these pairs, the Hurwitz number $h_{g,\mu}$ is even.
\end{rmk}


\end{document}